\newcommand\numberthis{\addtocounter{equation}{1}\tag{\theequation}}
\def\VR{\kern-\arraycolsep\strut\vrule &\kern-\arraycolsep}
\def\vr{\kern-\arraycolsep & \kern-\arraycolsep}
\newcounter{algGcount}
\newcounter{algPcount}
 \newcommand{\verteq}{\rotatebox{90}{$\,=$}}
\newtheorem{theorem}{Theorem}
\newtheorem{lemma}[theorem]{Lemma}
\newtheorem{prop}[theorem]{Proposition}
\newtheorem{corollary}[theorem]{Corollary}
\theoremstyle{theorem}
\theoremstyle{definition}
\newtheorem{definition}[theorem]{Definition}
\newtheorem*{prob}{Problem}
\newtheorem{rmk}[theorem]{Remark}
\newenvironment{remark}[1][]{\begin{rmk}[#1]\pushQED{\qed}}{\popQED \end{rmk}}
\newtheorem{qu}[theorem]{Question}
\newtheorem*{rmknonum}{Remark}
\newtheorem{obs}[theorem]{Observation}
\newtheorem{ex}[theorem]{Example}
\newenvironment{example}[1][]{\begin{ex}[#1]\pushQED{\qed}}{\popQED \end{ex}}
\newcommand{\rep}{\operatorname{rep}}
\newcommand{\ZZ}{\mathbb Z}
\newcommand{\CC}{\mathbb C}
\newcommand{\RR}{\mathbb R}
\newcommand{\NN}{\mathbb N}
\newcommand{\QQ}{\mathbb Q}
\newcommand{\A}{\mathcal  A}
\newcommand{\X}{\mathcal X}
\newcommand{\vv}{\mathcal V}
\newcommand{\dsp}{\displaystyle} 
\newcommand{\I}{\mathcal I}
\newcommand{\crk}{\operatorname{corank}}
\newcommand{\disc}{\operatorname{disc}}
\newcommand{\tr}{\operatorname{tr}}
\newcommand{\V}{\ensuremath{V_{\mathcal{X}}}}
\newcommand{\Ima}{\operatorname{Im}}
\newcommand{\ddim}{\operatorname{\mathbf{dim}}}
\newcommand{\dd}{\operatorname{\mathbf{d}}}
\newcommand{\ff}{\operatorname{\mathbf{f}}}
\newcommand{\K}{\mathcal{K}}
\newcommand{\ar}{\mathcal{A}}
\newcommand{\s}{\mathcal{S}}
\newcommand{\jj}{\mathcal{I}^{-}_j}
\newcommand{\ii}{\mathcal{I}^{+}_i}
\newcommand{\Span}{\mathsf{Span}}
\newcommand{\capa}{\mathbf{cap}}
\newcommand{\rk}{\operatorname{rank}}
\DeclareMathOperator{\cp}{cap}
\newcommand\restr[2]{{
  \left.\kern-\nulldelimiterspace 
  #1 
  \vphantom{\big|} 
  \right|_{#2} 
  }}
\begin{document}
\title{Simultaneous robust subspace recovery and semi-stability of quiver representations}
\author{Calin Chindris and Daniel Kline}
\address{University of Missouri-Columbia, Mathematics Department, Columbia, MO, USA}
\email[Calin Chindris]{chindrisc@missouri.edu}

\address{College of the Ozarks, Mathematics Department, Point Lookout, MO, USA}
\email[Daniel Kline]{dkline@cofo.edu}

\date{\today}
\bibliographystyle{amsalpha}
\subjclass[2010]{16G20, 13A50, 14L24}
\keywords{Capacity of completely positive operators, partitioned data sets, semi-stability of quiver representations, shrunk subspaces, simultaneous robust subspace recovery, Wong sequences}

\begin{abstract} We consider the problem of \emph{simultaneously} finding lower-dimensional subspace structures in a given $m$-tuple $(\X^1, \ldots, \X^m)$ of possibly corrupted, high-dimensional data sets all of the same size. We refer to this problem as \emph{simultaneous robust subspace recovery} (SRSR) and provide a quiver invariant theoretic approach to it. We show that SRSR is a particular case of the more general problem of effectively deciding whether a quiver representation is semi-stable (in the sense of Geometric Invariant Theory) and, in case it is not, finding a subrepresentation certifying in an optimal way that the representation is not semi-stable. In this paper, we show that SRSR and the more general quiver semi-stability problem can be solved effectively.
\end{abstract}

\maketitle
\setcounter{tocdepth}{1}
\tableofcontents

\smallskip

\section{Introduction}
\subsection{Formulation of the \textbf{SRSR} problem} Given a point cloud, classical robust subspace recovery asks for a lower-dimensional subspace containing a sufficiently large number of data points. In this paper, we take a quiver invariant theoretic approach to this problem. This allows us to work with multiple point clouds, all of the same cardinality, \emph{simultaneously}. To decide what it means for a lower-dimensional subspace structure to contain enough points across a collection of point clouds, we use the Hilbert-Mumford numerical criterion for semi-stability of quiver representations \cite{K} as our guiding principle.

Let $D, m ,d_1, \ldots, d_m$ be positive integers such that $D=\sum_{j=1}^m d_j$. Let $\X=\{X_1, \ldots, X_n\} \subseteq \RR^D$ be a data set such that the first $d_1$ coordinates of each one of the vectors $X_i$ encode measurements of a certain type-$1$, the next $d_2$ coordinates encode measurements of type-$2$, and so on, with the last $d_m$ coordinates encoding measurements of type-$m$. We write  
\[
\begin{array}{ccccccc} 
 															&	 X_1,	 &	\cdots  & X_i,  & 	\cdots	&       X_n &  \\
															& \verteq &	 & \verteq &   &  \verteq &  \\
\begin{matrix} \X^j \subseteq \RR^{d_j} \left.   \rule{0in}{.1in} \right \{  \end{matrix}  & \left [ \begin{matrix} v_1^1 \\ \vdots \\ v_1^j \\ \vdots \\ v_1^m \\ \end{matrix} \right ]  & & \left [ \begin{matrix} v_i^1 \\ \vdots \\ v_i^j \\ \vdots \\ v_i^m \\ \end{matrix} \right ] & & \left [ \begin{matrix} v_n^1 \\ \vdots \\ v_n^j \\ \vdots \\ v_n^m \\ \end{matrix} \right ] & \} \mbox{ type-$j$ measurements}   \\
\end{array}
\]
with $v^j_i \in \RR^{d_j}, i \in [n], j \in [m]$, and refer to $\X$ as a \emph{partitioned data set in $\RR^D$ of type} $(n,m,\underline{d})$ where $\underline{d}=(d_1, \ldots, d_m)$. It gives rise to the $m$-tuple of data sets $(\X^1, \ldots, \X^m)$ where $\X^j:=\{v^j_1, \ldots, v^j_n\} \subseteq \RR^{d_j}$, $j \in [m]$.  Conversely, any tuple of data sets, all of them of the same size, defines a partitioned data set.

Let us now consider the $m$-tuple of data sets $\X^j=\{v^j_1, \ldots, v^j_n\} \subseteq \RR^{d_j}$, $j \in [m]$. Our goal in this paper is to perform \emph{robust subspace recovery} on the data sets $\X^1, \ldots, \X^m$ \emph{simultaneously}. To explain what this means, let us consider an $m$-tuple of subspaces $(T_1, \ldots, T_m)$ with $T_j \subseteq \RR^{d_j}$, $j \in [m]$, and set
\[
\I_T:=\{i \in [n] \mid v^j_i \in T_j, \forall j \in [m]\}. 
\]
Thus the initial data set $\X \subseteq \RR^D$ contains precisely $|\I_T|$ vectors such that for each one of these vectors the first $d_1$ coordinates are from $T_1$, the next $d_2$ coordinates are from $T_2$, and so on. We are now ready to state what we mean by \emph{simultaneous robust subspace recovery} (SRSR). 

\begin{prob} \label{main-prob-1} Given an $m$-tuple of data sets $\X^j:=\{v^j_1, \ldots, v^j_n\} \subseteq \RR^{d_j}$, $j \in [m]$, is there an effective way to find, whenever possible, an $m$-tuple of subspaces $(T_1, \ldots, T_m)$ with $T_j \subseteq \RR^{d_j}$, $j \in [m]$, such that
\begin{equation} \label{main-ineq}
|\I_T|> {\sum_{j=1}^m \dim T_j \over D} \cdot n?
\end{equation}
By an \emph{SRSR solution for $\X$}, we mean any $m$-tuple of subspaces $(T_1, \ldots, T_m)$ for which $(\ref{main-ineq})$ holds. 
\end{prob}

When $m=1$, this is a central problem in \emph{Robust Subspace Recovery} (RSR)  (see for example \cite{Har-Moi-2012}, \cite{LerMau-2018}).  In \cite{Har-Moi-2012}, the authors argue that the RSR version ($m=1$) of (\ref{main-ineq}) is the optimal balance between robustness and efficiency. For arbitrary $m \geq 1$, we point out that $(\ref{main-ineq})$ is intimately related to the Hilbert-Mumford numerical criterion for semi-stability applied to quiver representations (see Section \ref{background-sec} for more details).  We point out that in general, SRSR cannot be achieved by simply using the classical RSR (see Examples \ref{srsr-ex1}-\ref{srsr-ex4}).

\subsection{The quiver approach}\label{quiver-approach-intro-sec} Our approach to SRSR is based on quiver invariant theory (see Section \ref{background-sec} for more details). We begin by viewing the $m$-tuple of data sets $\X^j \subseteq \RR^{d_j}$, $j \in [m]$, as a representation $\V$ of the complete bipartite quiver $\K_{n,m}$ with $n$ source vertices and $m$ sink vertices as follows:
\begingroup\makeatletter\def\f@size{9.5}\check@mathfonts
\[ \K_{n,m}:~
\vcenter{\hbox{  
\begin{tikzpicture}[point/.style={shape=circle, fill=black, scale=.3pt,outer sep=3pt},>=latex]
   \node[point,label={left:$x_1$}] (1) at (-4,3) {};
   \node[point,label={left:$x_i$}] (2) at (-4,.1) {};
   \node[point,label={left:$x_n$}] (3) at (-4,-3) {};
   
   \node[point,label={right:$y_1$}] (-1) at (0,3) {};
   \node[point,label={right:$y_j$}] (-2) at (0,.1) {};
   \node[point,label={right:$y_m$}] (-3) at (0,-3) {};
  
   \draw[dotted] (0,1.5)--(0,1.25);
   \draw[dotted] (-4,1.5)--(-4,1.25);
   \draw[dotted] (0,-1.25)--(0,-1.5);
   \draw[dotted] (-4,-1.25)--(-4,-1.5);
  
   \path[->]
  (1) edge  (-1)
  (1) edge  [bend left=10] (-2)
  (1) edge  [bend right=10] (-3)
  (2) edge  (-1)
  (2) edge (-2)
  (2) edge [bend right=5]  (-3)
  (3) edge  (-1)
  (3) edge (-2)
  (3) edge  (-3);
\end{tikzpicture} 
}}
\hspace{30pt}
\V:~
\vcenter{\hbox{
\begin{tikzpicture}[point/.style={shape=circle, fill=black, scale=.3pt,outer sep=3pt},>=latex]
   \node[point,label={left:$\RR$}] (1) at (-4,3) {};
   \node[point,label={left:$\RR$}] (2) at (-4,.1) {};
   \node[point,label={left:$\RR$}] (3) at (-4,-3) {};
   \node[point,label={right:$\RR^{d_1}$}] (-1) at (0,3) {};
   \node[point,label={right:$\RR^{d_j}$}] (-2) at (0,.1) {};
   \node[point,label={right:$\RR^{d_m}$}] (-3) at (0,-3) {};
   
   \draw[dotted] (0,1.5)--(0,1.25);
   \draw[dotted] (-4,1.5)--(-4,1.25);
   \draw[dotted] (0,-1.25)--(0,-1.5);
   \draw[dotted] (-4,-1.25)--(-4,-1.5);
  
   \path[->]
   (1) edge node[very near end, above] {$v^1_1$} (-1)
   (1) edge [bend left=10] node[very near end, above] {$v^j_1$} (-2)
   (1) edge [bend right=10] node[very near end, above] {$v^m_1$} (-3)
   (2) edge [bend left=0] node[near end, above] {$v^1_i$} (-1)
   (2) edge node[near end, above] {$v^j_i$} (-2)
   (2) edge [bend right=5] node[near end, below] {$v^m_i$} (-3)
   (3) edge node[very near end, below] {$v^1_n$} (-1)
   (3) edge [bend left=0] node[very near end, below] {$v^j_n$} (-2)
   (3) edge node[very near end, below] {$v^m_n$} (-3);  
\end{tikzpicture} 
}}
\]\endgroup

The dimension vector of $\V$, denoted by $\ddim \V \in \NN^{nm}$, simply records the dimensions of the vector spaces attached to the vertices of $\mathcal{K}_{n,m}$, i.e. 
$$\ddim\V(x_i)=1, \forall i \in [n], \text{~and~} \ddim \V(y_j)=d_j, \forall j \in [m].$$ 
By a \emph{weight} of a quiver, we simply mean an assignment of integer numbers to the vertices of the quiver. In the SRSR setup, we are primarily interested in the weight $\sigma_0 \in \ZZ^{nm}$ defined by
\begin{equation} \label{signot}
\sigma_0(x_i)=D, \forall i \in [n], \text{~and~} \sigma(y_j)=-n, \forall j \in [m].
\end{equation} 

One of the key concepts in quiver invariant theory is that of a semi-stable quiver representation with respect to a weight (see Definition \ref{semi-stab-defn}). It turns out that the partitioned data set $\X=(\X^1, \ldots, \X^m)$, viewed as the representation $\V$ of $\K_{n,m}$, is $\sigma_0$-semi-stable if and only if 
\begin{equation}\label{data-semi-stab-ineq}
D\cdot |\I_T|-n\cdot \left(\sum_{j=1}^m \dim T_j \right) \leq 0,
\end{equation}
for every $m$-tuple of subspaces $(T_1, \ldots, T_m)$ with $T_j \subseteq \RR^{d_j}$, $j \in [m]$. (For more details, see Remark \ref{SRSR-gen-ex}.) Thus Problem \ref{main-prob-1} can be rephrased as asking to decide whether the representation $\V$ is \emph{not} $\sigma_0$-semi-stable and, if that is the case, to find a subrepresentation of $\V$ for which $(\ref{data-semi-stab-ineq})$ does \emph{not} hold. 

Our quiver approach allows us to establish a bridge between quiver semi-stability, on one side, and key concepts in algebraic complexity,  such as the capacity and shrunk subspaces associated to completely positive operators, on the other.  This approach combined with known algorithms for dealing with the aforementioned concepts from algebraic complexity,  due to L. Gurvits \cite{Gurv04}, and G. Ivanyos, Y. Qiao, and K.V. Subrahmanyam \cite{IQS18},  yields the following results.

\begin{theorem} \label{main-thm} Let  $\X^j:=\{v^j_1, \ldots, v^j_n\} \subseteq \RR^{d_j}$, $j \in [m]$, be an $m$-tuple of data sets. Then there exist a deterministic polynomial time algorithm and a probabilistic algorithm to decide whether $(\X^1, \ldots, \X^m)$ has an SRSR solution and to compute such a solution in case it exists. 

Moreover, if $(\X^1, \ldots, \X^m)$ admits an SRSR solution then any output $(T_1, \ldots, T_m)$ of the deterministic or probabilistic algorithm is an \emph{optimal} SRSR solution  in the following sense.  If $(T'_1, \ldots, T'_m)$ is any SRSR solution for $(\X^1, \ldots, \X^m)$ then
\begin{equation} \label{optimal} 
|\I_T|- {\sum_{j=1}^m \dim T_j \over D}\cdot n \geq |\I_{T'}|- {\sum_{j=1}^m \dim T'_j \over D}\cdot n.
\end{equation}
\end{theorem}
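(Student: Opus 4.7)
The plan is to translate SRSR into a question about maximizing the weight functional $\sigma_0$ over subrepresentations of $\V$, and then to invoke the algorithms of Gurvits \cite{Gurv04} and Ivanyos--Qiao--Subrahmanyam \cite{IQS18} to produce a maximally destabilizing subrepresentation.

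First I would set up the exact correspondence between SRSR candidate tuples and subrepresentations of $\V$. Since $\ddim \V(x_i) = 1$, every subrepresentation $\mathcal{W} \subseteq \V$ is determined by a subset $\I \subseteq [n]$ (the indices where $\mathcal{W}(x_i) = \RR$) together with subspaces $T_j = \mathcal{W}(y_j) \leq \RR^{d_j}$ containing $\{v_i^j : i \in \I\}$. For any candidate tuple $(T_1, \ldots, T_m)$, setting $\mathcal{W}(y_j) = T_j$ and $\mathcal{W}(x_i) = \RR$ precisely when $i \in \I_T$ yields a valid subrepresentation; and conversely, given any subrepresentation, enlarging $\I$ to $\I_T$ (which, by definition of $\I_T$, always contains $\I$) only increases $\sigma_0$. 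Hence
\[
\max_{\mathcal{W}\subseteq \V}\sigma_0(\mathcal{W}) \;=\; \max_{(T_1,\ldots,T_m)}\Bigl(D\cdot |\I_T| - n \cdot \sum_{j=1}^m \dim T_j\Bigr),
\]
and an SRSR solution exists iff this maximum is positive, iff $\V$ is not $\sigma_0$-semi-stable. Note that $\sigma_0(\V) = nD - nD = 0$, so the weight is properly normalized.

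Next I would apply the deterministic polynomial-time algorithm of Ivanyos--Qiao--Subrahmanyam \cite{IQS18}, which decides $\sigma_0$-semi-stability of $\V$ and, via its internal Wong-sequence/shrunk-subspace computation, produces an explicit subrepresentation $\mathcal{W}^*$ attaining the maximum of $\sigma_0$. For the probabilistic route, I would encode $\V$ at the weight $\sigma_0$ as a completely positive operator $\Phi_\V$ whose capacity vanishes iff $\V$ is not $\sigma_0$-semi-stable, apply Gurvits's operator-scaling algorithm \cite{Gurv04} to detect capacity zero, and extract a shrunk subspace from which $\mathcal{W}^*$ is recovered.

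Finally, reading off $T^*_j = \mathcal{W}^*(y_j)$ and $\I_{T^*} = \{i : \mathcal{W}^*(x_i) \neq 0\}$ yields the required SRSR solution, and the inequality $\sigma_0(\mathcal{W}^*) \geq \sigma_0(\mathcal{W}')$ for every subrepresentation $\mathcal{W}'$ is exactly (\ref{optimal}) after dividing by $D$. The main obstacle I foresee is ensuring that the output of the cited algorithms is genuinely a \emph{maximizer} of $\sigma_0$, not merely a witness of nonvanishing: for IQS this should be built into the method once the correct dimension vector of the destabilizing subrepresentation is identified, whereas for Gurvits's algorithm one may need to iterate the scaling construction on a suitable quotient representation, or invoke uniqueness of the Kempf/Harder--Narasimhan optimal destabilizing subrepresentation to argue that the extracted $\mathcal{W}^*$ is optimal.
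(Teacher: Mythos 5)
Your overall architecture matches the paper's --- recast SRSR as deciding $\sigma_0$-semi-stability of $\V$, detect failure via capacity/non-commutative rank, and extract an optimal destabilizing subrepresentation --- but two load-bearing steps are missing, and you explicitly flag the second as an obstacle rather than resolving it. First, neither Gurvits's operator scaling nor the IQS algorithm outputs a subrepresentation: Gurvits's Algorithm \ref{algGlab} is the \emph{deterministic} decision procedure (it only certifies whether $\cp(T_{\X})>0$, and cannot be ``iterated on a quotient'' to produce a witness), while the shrunk subspace $U=B^{-1}(W^*)$ comes either from the deterministic IQS construction (Theorem \ref{IKQS-thm-shrunk}) or from the paper's actual probabilistic algorithm, which is a Schwartz--Zippel random choice of a maximal-rank $B\in\langle\A_{\X}\rangle$ followed by the second Wong sequence (Algorithm \ref{algPlab}) --- not a randomized use of operator scaling. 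The bridge you wave at (``from which $\mathcal{W}^*$ is recovered'') is precisely the content of Proposition \ref{equiv-cap-0} and Lemma \ref{datarec}: a shrunk subspace $U$ yields a rank-decreasing witness $Y=QQ^{T}$, and the block-diagonal structure of $T_{\X}(Y)$ yields a subrepresentation $W$ with the quantitative bound $\sigma_0\cdot\ddim W\geq \rk(Y)-\rk(T_{\X}(Y))\geq\crk(\X)$. Without that inequality you have an existence statement but no algorithmically computable output.

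Second, the optimality claim is not something you can get for free from ``IQS is built to maximize'' or from Harder--Narasimhan uniqueness (the HN-optimal subrepresentation maximizes a normalized slope, not the linear functional $\sigma_0\cdot\ddim W$ itself). The paper's argument is a concrete sandwich: any competing SRSR solution $(T_1',\ldots,T_m')$ with subrepresentation $W'$ gives rise to a diagonal rank-decreasing witness $Y'$ and hence a $c'$-shrunk subspace with $c'\geq\sigma_0\cdot\ddim W'$; the corank of a maximal-rank matrix bounds every shrunk-subspace deficiency from above, so $\crk(\A_{\X})\geq\sigma_0\cdot\ddim W'$; and the algorithm's output satisfies $\sigma_0\cdot\ddim W\geq\crk(\X)$ by Lemma \ref{datarec}. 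Chaining these gives $\sigma_0\cdot\ddim W\geq\sigma_0\cdot\ddim W'$, which is $(\ref{optimal})$ after dividing by $D$. You would need to supply both directions of the subrepresentation/shrunk-subspace correspondence (not just the one you sketch) and the corank upper bound to close this gap.
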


The probabilistic algorithm (Algorithm \ref{algPlab}) in the theorem above is a randomized version of the deterministic polynomial time algorithm from \cite{IQS18}. We point out that Algorithm \ref{algPlab}, based on the Schwartz-Zippel Lemma \cite{Schwartz-lemma-1980, Zippel-lemma-1979}, is easier to implement in practice.

In the general context of quiver representations, the \emph{discrepancy} of a quiver datum $(V, \sigma)$, where $V$ is a representation and $\sigma$ is a non-zero weight of a quiver, is defined as
$$
\disc(V, \sigma)=\max\{\sigma \cdot \ddim W \mid W \text{~is a subrepresentation of~} V\}.
$$
(We refer to Section \ref{general-semi-stab-sec} for the details behind our notations.) We have the following result, extending Theorem \ref{main-thm} to more general quiver representations.

\begin{theorem}\label{main-thm-2} Let $Q$ be a quiver without oriented cycles and let $(V, \sigma)$ be a quiver datum (defined over $\QQ$) such that $\sigma \cdot \ddim V=0$. 

\begin{enumerate}[(i)]
\item Then there exist deterministic polynomial time algorithms to check whether $V$ is $\sigma$-semi-stable or not.

\item Assume that $Q$ is a bipartite quiver (not necessarily complete), and that $\sigma$ is positive at the source vertices and negative at the sink vertices. Then there exists a deterministic polynomial time algorithm that constructs a subrepresentation $W \subseteq V$ such that 
$$
\disc(V, \sigma)=\sigma \cdot \ddim W.
$$
In particular, if $\sigma\cdot \ddim W=0$ then $V$ is $\sigma$-semi-stable. Otherwise, $W$ is an optimal witness to $V$ not being $\sigma$-semi-stable.
\end{enumerate}
\end{theorem}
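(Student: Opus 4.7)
The plan is to leverage known deterministic polynomial-time algorithms for the non-commutative rank problem---equivalently, the optimal shrunk subspace problem for a linear space of matrices---due to L.~Gurvits via operator capacity and to G.~Ivanyos, Y.~Qiao, and K.V.~Subrahmanyam via Wong sequences, and to reduce both parts of the theorem to that setting.

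For part (i), the strategy is to invoke King's numerical criterion: $V$ is $\sigma$-semi-stable if and only if $\sigma \cdot \ddim W \leq 0$ for every subrepresentation $W \leq V$. Since $Q$ has no oriented cycles, the ring of $\sigma$-semi-invariants is spanned (after passing to a suitable power of $\sigma$) by Schofield-type determinantal semi-invariants, and non-vanishing of such a semi-invariant at $V$ translates into non-commutative full rank of an explicit matrix tuple built from the arrow-maps of $V$. When $Q$ is not already bipartite, one first passes to an auxiliary bipartite quiver whose representations encode $\Hom$-spaces between projectives of $Q$, reducing the decision problem to the bipartite case with a translated weight; then running the IQS (or Gurvits) algorithm on the associated tuple settles semi-stability in polynomial time.

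For part (ii), assume $Q$ is bipartite with source vertices $x_i$ and sink vertices $y_j$, and suppose $\sigma(x_i) > 0 > \sigma(y_j)$ (replace $\sigma$ by $-\sigma$ if necessary; the nowhere-zero hypothesis makes this coherent). Encode $\sigma$ by a blow-up: replace each $V(x_i)$ and $V(y_j)$ by block spaces with multiplicities $\sigma(x_i)$ and $|\sigma(y_j)|$, and block-diagonalize each arrow map accordingly. Subrepresentations $W \leq V$ then correspond to shrunk subspaces of the resulting linear matrix space via the identity
\begin{equation*}
\sigma \cdot \ddim W \;=\; \sum_i \sigma(x_i)\dim W(x_i) + \sum_j \sigma(y_j)\dim W(y_j),
\end{equation*}
whose right-hand side is precisely the defect of the associated shrunk subspace. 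The IQS Wong-sequence procedure returns an optimal shrunk subspace; unblowing it yields a subrepresentation $W \leq V$ with $\sigma \cdot \ddim W = \disc(V, \sigma)$, which is exactly what is required.

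The main technical obstacle will be controlling the complexity of the blow-up in (ii): a naive vertex duplication produces an instance of size polynomial in the absolute values $|\sigma(i)|$, whereas the input bit-size is only $\log|\sigma(i)|$. To obtain genuinely polynomial running time one must either exploit the block-symmetry of the duplicated data to run IQS on a compressed representation, or adapt the Wong sequences to operate directly on the original arrow spaces while tracking $\sigma$ multiplicatively. Verifying that the optimality of the shrunk subspace produced by IQS transfers faithfully to $\sigma$-optimality of the reconstructed subrepresentation $W$---and that the acyclic-to-bipartite reduction in part (i) preserves semi-stability with respect to the translated weight---are the two places where the proof will need to be most careful.
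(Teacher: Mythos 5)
Your proposal follows essentially the same route as the paper: part (ii) is proved by blowing up the arrow maps according to the weight $\sigma$ to form a matrix space, establishing the two-way correspondence between subrepresentations and shrunk subspaces so that $\disc(V,\sigma)$ equals the maximal shrinkage defect, and running the Ivanyos--Qiao--Subrahmanyam Wong-sequence algorithm to extract an optimal shrunk subspace and hence an optimal $W$; part (i) is reduced to the bipartite case by passing to the auxiliary quiver whose arrows are the paths from positive-weight to negative-weight vertices, exactly your ``Hom-spaces between projectives'' device. The only cosmetic differences are that the paper justifies the bipartite reduction via \cite{ChiKli-Edmonds-2020} and the capacity criterion of \cite{ChiDer-2019} rather than Schofield-type semi-invariants, and it does not address the unary-versus-binary encoding of $\sigma$ that you rightly flag as the one genuine complexity subtlety in the blow-up.
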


\subsection*{Acknowledgment} The authors would like to thank to Petros Valettas for many enlightening discussions on the paper, especially for suggesting the use of the Schwartz-Zippel Lemma in Proposition \ref{prop-prob-alg}.  The authors are also thankful to the anonymous referee for comments improving  the paper, and for raising the interesting question about a possible connection between our results and maximally destabilizing 1-parameter subgroups in GIT.  Thanks are also due to Chi-yu Cheng  for asking the same question and for other interesting comments on the paper.  

C. Chindris is supported by Simons Foundation grant $\# 711639$.

\section{Background on quiver invariant theory}\label{background-sec}
Throughout, we work over the field $\RR$ of real numbers and denote by $\NN=\{0,1,\dots \}$. For a positive integer $L$, we denote by $[L]=\{1, \ldots, L\}$.

\subsection{Semi-stability of quiver representations} A \emph{quiver} $Q=(Q_0,Q_1,t,h)$ consists of two finite sets $Q_0$ (\emph{vertices}) and $Q_1$ (\emph{arrows}) together with two maps $t:Q_1 \to Q_0$ (\emph{tail}) and $h:Q_1 \to Q_0$ (\emph{head}). We represent $Q$ as a directed graph with set of vertices $Q_0$ and directed edges $a:ta \to ha$ for every $a \in Q_1$. 

A \emph{representation} of $Q$ is a family $V=(V(x), V(a))_{x \in Q_0, a\in Q_1}$ where $V(x)$ is a finite-dimensional $\RR$-vector space for every $x \in Q_0$, and $V(a): V(ta) \to V(ha)$ is an $\RR$-linear map for every $a \in Q_1$. A \emph{subrepresentation} $W$ of $V$, written as $W \subseteq V$, is a representation of $Q$ such that $W(x) \subseteq V(x)$ for every $x \in Q_0$, and $V(a)(W(ta)) \subseteq W(ha)$ and $W(a)$ is the restriction of $V(a)$ to $W(ta)$ for every arrow $a \in Q_1$. Throughout, we assume that all of our quivers have no oriented cycles.

The dimension vector $\ddim V \in \NN^{Q_0}$ of a representation $V$  is defined by $\ddim V(x)=\dim_{\RR} V(x)$ for all $x \in Q_0$. By a dimension vector of $Q$, we simply mean a $\NN$-valued function on the set of vertices $Q_0$. For two vectors $\sigma, \dd \in \RR^{Q_0}$, we denote by $\sigma \cdot \dd$ the scalar product of the two vectors, i.e. $\sigma \cdot \dd=\sum_{x \in Q_0} \sigma(x)\dd(x)$. 

\begin{definition}\label{semi-stab-defn} Let $\sigma \in \ZZ^{Q_0}$ be a weight of $Q$. A representation $V$ of $Q$ is said to be \emph{$\sigma$-semi-stable} if 
\[
\sigma \cdot \ddim V=0 \text{~and~} \sigma \cdot \ddim W \leq 0
\]
for all subrepresentations $W$ of $V$.
\end{definition}
\noindent
We point out that, as shown by King \cite{K}, these linear homogeneous inequalities come from the Hilbert-Mumford's numerical criterion for semi-stability applied to quiver representations.

\begin{remark}[\textbf{Classical RSR from quiver semi-stability}] Let $\mathcal{X} = \{v_1, \ldots, v_n\} \subseteq \RR^{D}$ be a data set of $n$ vectors in $\mathbb{R}^D$. We can view $\X$ as the representation  
\[
V_{\X} =~\vcenter{\hbox{
\begin{tikzpicture}[point/.style={shape=circle, fill=black, scale=.3pt,outer sep=3pt},>=latex]
   \node[point,label={left:$\RR$}] (1) at (-2,1.5) {};
   \node[point,label={left:$\RR$}] (2) at (-2,.1) {};
   \node[point,label={left:$\RR$}] (3) at (-2,-1.5) {};
   \node[label={center: $\vdots$}] (4) at (-2, .75) {};
   \node[label={center:$\vdots$}] (5) at (-2,-.75) {};
   \node[point,label={right:$\RR^D$}] (-2) at (0,.1) {};

   \path[->]
  (1)  edge [bend left=10] node[above] {$v_1$}  (-2)
  (2) edge node[below] {$v_i$} (-2)
  (3) edge [bend right=10] node[below] {$v_n$}  (-2);

\end{tikzpicture} 
}}
\mbox{ of the $n$-subspace quiver }
\s_n:~\vcenter{\hbox{  
\begin{tikzpicture}[point/.style={shape=circle, fill=black, scale=.3pt,outer sep=3pt},>=latex]
   \node[point,label={left:$x_1$}] (1) at (-2,1.5) {};
   \node[point,label={left:$x_i$}] (2) at (-2,.1) {};
   \node[point,label={left:$x_n$}] (3) at (-2,-1.5) {};
   \node[label={center: $\vdots$}] (4) at (-2, .75) {};
   \node[label={center:$\vdots$}] (5) at (-2,-.75) {};
   \node[point,label={right:$y_1$}] (-2) at (0,.1) {};

   \path[->]
  (1) edge [bend left=10] node[above] {$a_1$} (-2)
  (2) edge node[below] {$a_i$} (-2)
  (3) edge [bend right=10] node[below] {$a_n$}  (-2);

\end{tikzpicture}}}. 
\]
Let us consider the weight $\sigma_0 = (D, \ldots, D, -n)$ and note that $\sigma_0 \cdot \ddim \V = 0$. For a subspace $T \subseteq \RR^D$, let $\I_T = \{ i \in [n]| v_i \in T \}$, and consider the subrepresentation $W_T \subseteq \V$ defined by
\[ 
W_T(x_i) = \begin{cases} \RR & i \in \I_T \\ 0 & i \notin \I_T \end{cases}, W_T(y_1) = T, \mbox{ and } W_T(a_i)=v_i, \forall i \in [n].
\]
It turns out that these are all the subrepresentation of $\V$ that matter when checking whether $\V$ is $\sigma_0$-semi-stable. Indeed it is easy to check that if $W$ is a subrepresentation of $\V$ then
$$
\sigma_0 \cdot \ddim W \leq \sigma_0 \cdot \ddim W_{T},
$$
where $T=W(y_1) \subseteq \RR^D$. Thus we get that $\V$ is $\sigma_0$-semi-stable if and only if
\[ \sigma_0 \cdot \ddim W_T \leq 0, \; \forall T \subseteq \RR^D, \]
which is equivalent to 
$$
|\I_T| \leq \frac{\dim T}{D} \cdot n, \forall T \subseteq \RR^D.
$$
So $\X$ has a lower-dimensional subspace structure precisely when $\V$ is not $\sigma_0$-semi-stable as a representation of $\s_n$. By an \emph{RSR solution for $\X$}, we mean any subspace $T \subseteq \RR^D$ for which 
$$
|\I_T| > \frac{\dim T}{D} \cdot n.
$$
\end{remark}

\begin{remark}[\textbf{SRSR from quiver semi-stability}]\label{SRSR-gen-ex} Let $\X=(\X^1, \ldots, \X^m)$ be a partitioned data set with $\X^j=\{v^j_1, \ldots, v^j_n\} \subseteq \RR^{d_j}$, $j \in [m]$, and let $\V$ be the corresponding representation of the complete bipartite quiver $\K_{n,m}$. Recall that in this case the weight $\sigma_0$ of $\K_{n,m}$ is defined as
$$
\sigma_0(x_i)=D, \forall i \in [n], \text{~and~} \sigma(y_j)=-n, \forall j \in [m],
$$
where $D=\sum_{j \in [m]} d_j$. Every $m$-tuple of subspaces $(T_1, \ldots, T_m)$ with $T_j \subseteq \RR^{d_j}$, $ j \in [m]$, gives rise to a subrepresentation $W_T$ of $\V$ defined by
\[
W_T(x_i)=
\begin{cases}
\RR &\text{if~} i \in \I_T\\
0&\text{if~} i \notin \I_T
\end{cases},
\hspace{8pt}
W_T(y_j)=T_j,
\text{~and~}
W_T(a_{ij})=
\begin{cases}
v^j_i &\text{if~} i \in \I_T\\
0&\text{if~} i \notin \I_T
\end{cases}
\]
where $a_{ij}$ denotes the arrow from $x_i$ to $y_j$ for all $i \in [n]$ and $j \in [m]$. These are all the subrepresentations of $\V$ that matter when checking the $\sigma_0$-semi-stability of $\V$. Indeed, if $W$ is a subrepresentation of $\V$ then
$$
\sigma_0 \cdot \ddim W \leq \sigma_0\cdot W_{T},$$
where $T=(W(y_1), \ldots, W(y_m))$. Thus $\V$ is a $\sigma_0$-semi-stable representation of $\K_{n,m}$ if and only if 
\begin{equation}
\sigma_0 \cdot \ddim W_T=D\cdot |\I_T|-n\cdot \left(\sum_{j=1}^m \dim T_j \right) \leq 0.
\end{equation}
for every $m$-tuple of subspaces $(T_1, \ldots, T_m)$ with $T_j \subseteq \RR^{d_j}$, $j \in [m]$. So $\X$ has a (simultaneous) lower-dimensional subspace structure precisely when $\V$ is not $\sigma_0$-semi-stable as a representation of $\K_{n,m}$. 
\end{remark}

\begin{example} \label{srsr-ex1} This is an example of a data set that has an RSR solution $T$ whose projections do not yield an SRSR solution for the data set viewed as a partitioned data set. Consider the data set $\X \subseteq \RR^5$ given by
\[ \begin{array}{cccc} 
X_1	 & X_2 & X_3 & X_4 \\
\verteq	 & \verteq & \verteq & \verteq	\\ 
 \left [\begin{matrix}  1  \\ 2 \\ 1 \\ 2 \\ 1   \end{matrix} \right ]  &
\left [ \begin{matrix}   5 \\ 4 \\ 3 \\ 6 \\ 3    \end{matrix} \right ]  &
 \left [\begin{matrix}   13 \\ 9 \\ 3 \\ 5 \\ 1    \end{matrix} \right ]  &
\left [\begin{matrix}   3 \\ 1 \\ 2 \\ 2 \\ 9    \end{matrix} \right ] 
\end{array}
 \] 

\vspace{.2in} 
\noindent Then $T=\Span(X_1,X_2)$ is an RSR solution for $\X$ since $ \displaystyle |\I_T| = 2 > \frac{2}{5} \cdot 4$.  Let us now view $\X$ as a partitioned data set of type $(4, 2, (2,3))$. Let $T_1=\Span \left( \left [\begin{matrix}  1  \\ 2   \end{matrix} \right ], \left [\begin{matrix}  5  \\ 4   \end{matrix} \right ] \right)=\RR^2$ and $T_2=\Span \left( \left [\begin{matrix}  1  \\ 2 \\ 1   \end{matrix} \right] \right) \cong \RR$ be the projections of $T$ onto $\RR^2$ and $\RR^3$, respectively. Since $|\I_{(T_1, T_2)}|=2 \leq {2 +1 \over 5}\cdot 4$, the pair of subspaces $(T_1, T_2)$ is not an SRSR solution for $\X$. In fact, it is immediate to check that the partitioned data set $\X$ has no SRSR solutions, i.e. $\V$ is $\sigma_0$-semi-stable as a representation of $\K_{4,2}$.
\end{example}

\begin{example} \label{srsr-ex2} This is an example where each individual data set $\X^j$ admits an RSR solution but the partitioned data set $\X=(\X^1, \ldots, \X^m)$ does not have an SRSR solution. Consider the partitioned data set given by
\[ \begin{array}{ccccc} 
 			&	 X_1	 	 & X_2 		&       X_3 &	 X_4   \\
			& \verteq	 & \verteq	& \verteq  & \verteq  \\ 
\multirow{2}{*}{$\X^1 \subseteq \RR^2 \left. \rule{0in}{.20in} \right \{ $} & \multirow{5}{*}{ $\left [\begin{matrix}  4  \\ 2 \\ 12 \\ 5 \\ 3   \end{matrix} \right ] $} &
\multirow{5}{*}{ $\left [\begin{matrix}  8  \\ 4 \\ 4 \\ 1 \\ 1   \end{matrix} \right ] $}  & \multirow{5}{*}{ $\left [\begin{matrix}  12  \\ 6 \\ 4 \\ 2 \\ 8    \end{matrix} \right ] $} & \multirow{5}{*}{ $\left [\begin{matrix}  3  \\ 1 \\ 14 \\ 6 \\ 7  \end{matrix} \right ] $}  \\
& & & &   \\
\multirow{3}{*}{$\X^2 \subseteq \RR^3 \left.   \rule{0in}{.27in} \right \{ $} & & & &   \\
& & & &   \\
& & & &   \\
\end{array}
 \] 

\noindent Then $T_1=\Span \left(  \left [\begin{matrix}  2  \\1 \end{matrix} \right ]    \right)$ is an RSR solution for $\X^1$, and $T_2=\Span \left( \left [\begin{matrix}  12 \\ 5 \\ 3   \end{matrix} \right ], \left [\begin{matrix}  2 \\ 1 \\ 4    \end{matrix} \right ] \right)$ is a RSR solution for $\X^2$. On the other hand, it is easy to check that the partitioned data set $\X=(\X^1,\X^2)$ does not have any SRSR solutions, i.e. $\V$ is $\sigma_0$-semi-stable as a representation of $\K_{4,2}$.
\end{example}

\begin{example}\label{srsr-ex3} This is an example of a partitioned data set $\X=(\X^1, \X^2, \X^3)$ that has an SRSR solution while the data set  $\X^3$ does not have an RSR solution. Consider the partitioned data set given by
\[ \begin{array}{cccccc} 
 			&	 X_1	 	 & X_2 		&       X_3 &	 X_4   &      X_5  \\
			& \verteq	 & \verteq	& \verteq  & \verteq  & \verteq  \\ 
\multirow{4}{*}{$\X^1 \subseteq \RR^3 \left. \rule{0in}{.20in} \right \{ $} & \multirow{7}{*}{ $\left [\begin{matrix}  20  \\ 7 \\ 3 \\ 40 \\ 40 \\ 40 \\ 20 \\ 10   \end{matrix} \right ] $} &
\multirow{7}{*}{ $\left [\begin{matrix}  40  \\ 60 \\ 3 \\ 26 \\ 26 \\ 26 \\ 40 \\ 22   \end{matrix} \right ] $}  & \multirow{7}{*}{ $\left [\begin{matrix}  100  \\ 15 \\ 6 \\ 50 \\ 105 \\ 150 \\ 250 \\120   \end{matrix} \right ] $} & \multirow{7}{*}{ $\left [\begin{matrix}  100  \\ 35 \\ 15 \\ 16 \\ 16 \\ 16 \\  10 \\ 5   \end{matrix} \right ] $} & \multirow{7}{*}{ $\left [\begin{matrix}  100  \\ 100 \\ 15 \\ 100 \\ 200 \\ 27 \\ 300 \\ 140  \end{matrix} \right ] $}  \\
& & & & &  \\
& & & & & \\
\multirow{3}{*}{$\X^2 \subseteq \RR^3 \left.   \rule{0in}{.27in} \right \{ $} & & & & &   \\
& & & & &  \\
& & & & &  \\
\multirow{2}{*}{$\X^3 \subseteq \RR^2 \left.   \rule{0in}{.20in} \right \{ $} & & & & &  \\
 & & & & &  \\
\end{array}
 \] 
 
\noindent Let $(T_1, T_2, T_3)$ be the tuple of subspaces defined by 
\[ T_1 = \left \langle \begin{bmatrix} 20 \\ 7 \\ 3 \end{bmatrix} \right \rangle \subseteq \RR^3, T_2 = \left \langle\begin{bmatrix} 1 \\ 1 \\ 1 \end{bmatrix} \right \rangle \subseteq \RR^3, T_3 = \left \langle \begin{bmatrix} 2 \\ 1 \end{bmatrix} \right \rangle \subseteq \RR^2. \]
Since $\I_T = \{1, 4\}$, we have
\[ |\I_T| > \frac{1 + 1 +1}{3 + 3 + 2} \cdot 5,\] 
thus $(T_1, T_2, T_3)$ is an SRSR solution for $\X$. On the other hand, it is immediate to check that the data set $\X^3$ does not have an RSR solution.  
\end{example}

\begin{example} \label{srsr-ex4}
In this example, we show that there exist partitioned data sets that have several SRSR solutions. We also illustrate the optimal SRSR solution (in the sense of $(\ref{optimal})$) recovered by Algorithms \ref{algGlab} and \ref{algPlab}. Consider the partitioned data set of type $(6, 2, (2,3))$ given by

\[ \begin{array}{cccccc}
 X_1	 	 & X_2 		& X_3      &	 X_4   &   X_5     & X_6 \\ 
 \verteq	 & \verteq	& \verteq  & \verteq  & \verteq  	& \verteq \\
\left [ \begin{matrix} 1 \\ 2 \\ 4 \\ 5 \\ 7 \end{matrix} \right ] & 
\left [ \begin{matrix} 2 \\ 4 \\ 1 \\ 1 \\ 1 \end{matrix} \right ] &
\left [ \begin{matrix} 1 \\ 2 \\ 30 \\ 10 \\ 30 \end{matrix} \right ] &
\left [ \begin{matrix} 1 \\ 2 \\ 4 \\ 4 \\ 4 \end{matrix} \right ] &
\left [ \begin{matrix} 30 \\ 60 \\ 3 \\ 1 \\ 3 \end{matrix} \right ] &
\left [ \begin{matrix} 1 \\ 1 \\ 7 \\ 2 \\ 5 \end{matrix} \right ] \\
 \end{array} 
\] 

This data set has exactly two SRSR solutions. First, consider the tuple of subspaces $(S_1, S_2)$ where
\[S_1 = \left \langle \left [ \begin{matrix} 1 \\ 2 \end{matrix} \right ] \right \rangle \mbox{ and } S_2 = \RR^3.\] Then \[|\I_{(S_1, S_2)}| = |\{1, 2, 3, 4, 5\}| > \frac{1  + 3}{2 + 3} \cdot 6 = \frac{24}{5},\] so $(S_1, S_2)$ is an SRSR solution.

Next, consider the tuple of subspaces $(U_1, U_2)$ where 
\[U_1 = \left \langle \left [ \begin{matrix} 1 \\ 2 \end{matrix} \right ] \right \rangle \mbox{ and } U_2=\left \langle \left [ \begin{matrix} 1 \\ 1 \\ 1 \end{matrix} \right ], \left [ \begin{matrix} 3 \\ 1 \\ 3 \end{matrix} \right ] \right \rangle .\] Then \[ |\I_{(U_1, U_2)}| = |\{2, 3, 4, 5\}| > \frac{1 + 2}{2+3} \cdot 6 = \frac{18}{5},\] so $(U_1, U_2)$ is an SRSR solution.

According to (\ref{optimal}), $(U_1, U_2)$ is the optimal SRSR solution returned by Algorithms \ref{algGlab} and \ref{algPlab} since
\[ |\I_{(U_1, U_2)}| - \frac{1+2}{2+3}\cdot 6 = \frac{2}{5} > |\I_{(S_1,S_2)}| - \frac{1+3}{2+3}\cdot 6 = \frac{1}{5}. \]

\end{example} 

\subsection{Semi-stability via capacity of quiver representations} \label{ssviacap} To address the algorithmic aspects of the semi-stability of quiver representations, we recall the notion of a completely positive operator. 

\begin{definition}
\begin{enumerate}[(i)]
\item Let $\mathcal{A}=\{A_1, \ldots, A_{\ell}\}$ be a collection of real $N \times N$ matrices.  The \emph{completely positive operator (cpo)} associated to $\mathcal{A}$ is the linear operator $T_{\mathcal{A}} \colon \mathbb{R}^{N \times N} \rightarrow \mathbb{R}^{N \times N}$ defined by 
\[X \mapsto  \sum_{i=1}^n A_i X A_i^T \]  The matrices $A_1, \ldots, A_{\ell}$ are called the \emph{Kraus operators} of $T_{\mathcal{A}}$. When no confusion arises, we simply write $T$ instead of $T_{\A}$. 
\item  The \emph{capacity} of a cpo $T$ is defined as 
\[ \cp T = \inf \{ \det \left ( T(Y) \right ) | Y \mbox{ is positive definite with } \det Y = 1 \}  \] 
\end{enumerate}
\end{definition} 

Next we show how to construct a cpo from our representation $\V$ of the complete bipartite quiver $\K_{n,m}$. Let $M=mn$, $N = nD$,  and set
 \[ \s_{\sigma_0}:=\left\{ (i, j, q, r) \middle  \vert 
\begin{array}{l}
i \in [n], j \in [m],\\
\\

(j-1) n < q \leq  j n,\\
\\
(i-1) D < r \leq  i D
\end{array}
\right \}.\] 

For each index $(i,j,q,r) \in \s_{\sigma_0}$, let $V^{i,j}_{q,r}$ be the $M \times N$ block matrix (of size $N \times N$) whose $(q,r)$-block-entry is $v_i^j \in \mathbb{R}^{d_j\times 1}$, and all other block entries are zero matrices of appropriate size. 

\begin{definition} The \emph{cpo associated to the representation} $\V$ is the cpo $T_{\X}$ associated to the collection of matrices 
\[
\A_{\X}:=\{V^{i,j}_{q,r} \mid (i,j,q,r) \in \s_{\sigma_0}\}.
\]
We also refer to $T_{\X}$ as the \emph{cpo associated to the partitioned data set} $\X=(\X^1, \ldots, \X^m)$.
\end{definition}

\begin{remark} We point out $\A_{\X}$ is a linearly independent set of matrices in $\RR^{N \times N}$ with $|\A_{\X}|=n^2mD$.
\end{remark}

\begin{theorem} (see \cite[Theorem 1]{ChiDer-2019}) \label{CD1} Let $\V$ be the representation of the complete bipartite quiver $\mathcal{K}_{n,m}$ defined by the partitioned data set $\mathcal{X}=(\X^1, \ldots, \X^m)$. Then  $\cp(T_{\X}) = 0$ if and only if $\V$ is not $\sigma_0$-semi-stable. 
\end{theorem}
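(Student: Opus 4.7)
The plan is to combine Gurvits' capacity characterization with a direct dimension computation adapted to the block structure of the Kraus operators. By a theorem of Gurvits, $\cp(T) = 0$ for a completely positive operator $T(X) = \sum_i A_i X A_i^T$ on $\RR^{N \times N}$ if and only if there exists a subspace $U \subseteq \RR^N$ with $\dim \sum_i A_i U < \dim U$: one writes a rank-deficient PSD witness $Y$ as $\sum_k y_k y_k^T$ with $\{y_k\}$ spanning $\mathrm{range}(Y)$, and notes that $\mathrm{range}(T(Y)) = \sum_i A_i\,\mathrm{range}(Y)$. So it suffices to match shrunk subspaces of $T_\X$ against the destabilizing tuples $(T_1,\ldots,T_m)$ described in Remark \ref{SRSR-gen-ex}.

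Decompose $\RR^N = \bigoplus_{i=1}^n \RR^D$ according to the $n$ column super-blocks, let $\pi_i \colon \RR^N \to \RR^D$ denote the $i$-th projection, and for a subspace $U \subseteq \RR^N$ set $I' = \{i \in [n] : \pi_i(U) \neq 0\}$. The key calculation is
\[
\dim \sum_{(i,j,q,r) \in \s_{\sigma_0}} V^{i,j}_{q,r}\, U \;=\; n \sum_{j=1}^m \dim T'_j, \qquad \text{where } T'_j := \mathrm{span}\{v_i^j : i \in I'\}.
\]
This follows because $V^{i,j}_{q,r} y = y_r \cdot v_i^j$ sits in the $q$-th row-block, so distinct row-blocks produce independent contributions; the redundant indexing (every admissible $(q,r)$ appears for each fixed $(i,j)$) forces the $q$-th row-block component of the image to be exactly $T'_{j(q)}$, and each $j$ accounts for $n$ row-blocks.

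Both directions then fall out. If $\V$ is not $\sigma_0$-semi-stable, Remark \ref{SRSR-gen-ex} yields $(T_1,\ldots,T_m)$ with $D\,|\I_T| > n\sum_j \dim T_j$; replacing each $T_j$ by $\mathrm{span}\{v_i^j : i \in \I_T\}$ preserves $\I_T$ and only strengthens the inequality, and $U := \bigoplus_{i \in \I_T} \RR^D$ is a shrunk subspace with $\dim U = D\,|\I_T|$. Conversely, any shrunk $U$ lies in $\bigoplus_{i \in I'} \RR^D$, so $|I'|\,D \geq \dim U > n\sum_j \dim T'_j$; since $I' \subseteq \I_{T'}$ automatically, $(T'_1,\ldots,T'_m)$ destabilizes $\V$. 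The main obstacle is the dimension computation in the display above: one must verify that the carefully redundant choice of Kraus operators in $\A_\X$ is exactly what makes the image decouple row-block by row-block and depend only on $I'$. Once that is established, matching against the semi-stability inequality from Remark \ref{SRSR-gen-ex} is routine.
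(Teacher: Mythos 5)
Your argument is correct. Note first that the paper itself offers no proof of this statement: Theorem \ref{CD1} is quoted from \cite[Theorem 1]{ChiDer-2019}, so there is no internal proof to match against. What you have written is a self-contained proof of the cited result, and it checks out: Gurvits' characterization ($\cp(T)=0$ iff $T$ is rank-decreasing) together with the standard translation between rank-decreasing witnesses and shrunk subspaces reduces everything to your displayed dimension formula, and that formula is right --- the image of $V^{i,j}_{q,r}$ lands entirely in row-block $q$, so the total image decomposes as $\bigoplus_q T'_{j(q)}$, each $T'_j$ appearing $n$ times, and the sum over the $D$ column-indices $r$ attached to a fixed $i$ contributes $\Span(v_i^j)$ exactly when $\pi_i(U)\neq 0$. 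Both directions of the matching with destabilizing tuples from Remark \ref{SRSR-gen-ex} then go through as you say (your phrase ``preserves $\I_T$'' should strictly be ``can only enlarge $\I_T$'', but that only helps the inequality). It is worth pointing out that your route is essentially the one the paper takes \emph{for other purposes} later on: the rank-decreasing/shrunk-subspace dictionary is Proposition \ref{equiv-cap-0}\,(iii)$\Leftrightarrow$(iv), and your two directions are in substance Lemma \ref{datarec} and the $Y'$-construction in the proof of Theorem \ref{main-thm} (generalized in Lemma \ref{subrep-witness-shrunk-general-lemma}). So you have effectively shown that, granting Gurvits' capacity lemma, Theorem \ref{CD1} for the SRSR weight $\sigma_0$ follows from elementary linear algebra already present in the paper, without invoking the Brascamp--Lieb machinery of \cite{ChiDer-2019}; what the external reference buys is the statement for arbitrary bipartite quiver data $(V,\sigma)$, which your block computation would need to be redone for (with $\dd(x_i)>1$ the operators are no longer rank one and the per-coordinate analysis of $y_r$ must be replaced by the $Y_r$-blocks as in Lemma \ref{subrep-witness-shrunk-general-lemma}).
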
 

\begin{remark} The construction above, along with Theorem \ref{CD1} can be formulated for representations of arbitrary bipartite quivers. In the general setting, the cpo above is known as the \emph{Brascamp-Leib operator}, and has deep connections to the celebrated Brascamp-Lieb inequality from Harmonic Analysis.
\end{remark} 

\section{Algorithm \ref{algGlab}} \label{alg-sec} According to Theorem \ref{CD1}, a partitioned data set $\X=(\X^1, \ldots, \X^m)$ has a lower-dimensional subspace structure if and only if the capacity of $T_{\X}$ is zero. To check whether or not the capacity is zero, we will make use of an algorithm originally published in \cite{Gurv04}, known as Algorithm \ref{algGlab}. In order to state Algorithm \ref{algGlab}, we first recall some definitions. 

\begin{definition} Let $T_{\A}$ be a cpo with Kraus operators $\A = \{A_1, \ldots, A_{\ell}\} \subseteq \RR^{N \times N}$.
\begin{enumerate}[(i)]

\item The \emph{dual} of $T_{\A}$, denoted $T_{\mathcal{A}}^*$, is the linear operator defined by \[X \mapsto  \sum_{i=1}^{\ell} A_i^T X A_i  \]

\item We call $T_{\mathcal{A}}$ (and $T_{\mathcal{A}}^*$) \emph{doubly stochastic} if $$ \dsp T_{\A}(I) = T_{\mathcal{A}}^*(I) = I.$$ 
 
\item The \emph{distance of $T_{\A}$ to double stochasticity} is defined by:

\[\text{ds}(T_{\A}) : = \tr\left((T_{\A}(I) - I)^2 \right ) + \tr\left( (T_{\A}^*(I) - I)^2 \right )\]
\end{enumerate}
\end{definition} 

The next definition gives two specific instances of a more general process known as \emph{operator scaling}. It leads to an efficient algorithm for testing for the positivity of the capacity of a cpo. 

\begin{definition} Let $T$ be a cpo with Kraus operators $\{A_1, \ldots, A_{\ell} \} \subseteq \RR^{N \times N} $ such that $T(I)$ and $T^{*}(I)$ are both invertible. 
\begin{enumerate}[(i)]
	\item The \emph{right normalization of $T$}, $T_R$,  is the cpo with Kraus operators $$\dsp A_1\cdot T^*(I)^{-1/2}, \ldots,  A_{\ell} \cdot T^*(I)^{-1/2}.$$ 
	\item The \emph{left normalization of $T$}, $\dsp T_L$,  is the cpo with Kraus operators  $$\dsp T(I)^{-1/2}A_1, \ldots, T(I)^{-1/2}A_{\ell}.$$ 
\end{enumerate}

We point out that $T^*_R(I)=\dsp T_L(I) = I $.	
\end{definition}

\noindent We are now ready to state Algorithm \ref{algGlab}, adapted slightly to our context from \cite[page 47]{GarGurOliWig-2015}. We point out that their version is concerned with estimating capacity within a given multiplicative error, whereas we are simply concerned with checking whether or not the capacity is positive. It has been proved that Algorithm \ref{algGlab} runs in (deterministic) polynomial time.

\vspace{.1in} 

\begin{alg}[label={algGlab}]{}{}

\textbf{Input}: A cpo $T$ whose Kraus operators (of size $N \times N$)  lie over $\mathbb{Z}$ and  $M$ the maximum magnitude of each entry.\\
\textbf{Output}: The positivity of $\capa(T)$. \\

\begin{enumerate} 
\item If either $T(I)$ or $T^*(I)$ is singular then output $\cp(T) = 0$. Otherwise, continue.
\item For each integer $j\geq 1$, perform (alternatively) right and left normalization on $T=T_0$ with $T_j$ the operator after $j$-steps. 
\item If $ \dsp \text{ds}(T_j) \leq \frac{1}{4N^3}$ after $j$ steps,  output $\cp(T) > 0$. 
\item If $ \dsp \text{ds}(T_j) > \frac{1}{4N^3}$ after $\dsp 4N^3 \left ( 1 + 10N^2 \log(MN) \right )$ steps, output $\cp(T) = 0$.

\end{enumerate} 

\end{alg} 

\vspace{.1in}

Even though the algorithm is stated over $\ZZ$, it is valid over $\QQ$, since multiplying the Kraus operators by a (non-zero) scalar does not affect the positivity of the capacity. While we have worked over $\mathbb{R}$, almost all data will be rational points, so there is no loss in working over $\mathbb{Q}$. For proof of correctness, we refer the reader to \cite{GarGurOliWig-2015}. See also \cite{Gurv04}. 

\begin{remark} Algorithm G has been developed by Gurvits in \cite{Gurv04} to solve Edmond's Problem for certain classes of matrices.  Edmond's Problem, originally posed in \cite{Edm-1967}, asks whether or not the span of a collection of (square) matrices contains a non-singular matrix.  In \cite{ChiKli-Edmonds-2020}, we show how Edmond's problem can be expressed via orbit semigroups of quiver representations.
\end{remark} 

\section{Shrunk subspaces and rank decreasing witnesses} \label{ssrkdc}

Given a partitioned data set $\mathcal{X}$, using Theorem \ref{CD1} and Algorithm \ref{algGlab}, we can check in deterministic polynomial time whether $\X$ has an SRSR solution (equivalently, whether $\cp(T_\X)=0$). When this is the case, we still need to find a way to recover this lower-dimensional subspace structure. In order to do this, we consider shrunk subspaces and second Wong sequences which we explain below.

\begin{definition} Let $\ar = \{A_1, \ldots A_{\ell} \}$ an $\ell$-tuple of $N \times N$ real matrices and $T_{\ar}$ the cpo with Kraus operators $A_1, \ldots, A_{\ell } $. Let $c$ be a non-negative integer. A subspace $U \subseteq \mathbb{R}^N$ is called a \emph{$c$-shrunk subspace} for $\A$ (or $T_{\A}$) if 
\[ \dim U-\dim \left( \sum_{i=1}^{\ell} A_i(U) \right) \geq c.\]
We say that $U$ is a \emph{shrunk subspace} for $\A$ if $U$ is a $c$-shrunk subspace for some $c>0$.
\end{definition}

\noindent We point out that shrunk subspaces have also been studied by D. Eisenbud and J. Harris in \cite{EisHar}, though they refer to them as \emph{compression spaces}.

\begin{definition} A cpo $T$ on $\mathbb{R}^{N\times N}$ is said to be \emph{rank-decreasing} if there exists an $N \times N$ positive semi-definite matrix $Y$ such that $\rk T(Y)<\rk Y$. In this case, we call $Y$ a \emph{rank-decreasing witness} for $T$. 
\end{definition}

\begin{prop} \label{equiv-cap-0} Let $\X$ be a partitioned data set, $\A_{\X} = \{A_1, \ldots, A_{\ell}\}$ the Kraus operators associated to $\X$, and $T_{\X}$ the corresponding cpo. Then the following statements are equivalent:
\begin{enumerate}[(i)]
\item $\V$ is not $\sigma_0$-semi-stable;
\item $\cp(T_{\X}) = 0$;
\item $T_{\X}$ is rank-decreasing;
\item $T_{\X}$ has a shrunk subspace.
\end{enumerate}
\end{prop}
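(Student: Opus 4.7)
Theorem \ref{CD1} already supplies the equivalence $(i) \Leftrightarrow (ii)$. To close the loop, I plan to prove $(iii) \Leftrightarrow (iv)$ via a linear-algebra identity, $(iii) \Rightarrow (ii)$ via a perturbation argument, and $(i) \Rightarrow (iv)$ by an explicit construction; together with $(iv) \Rightarrow (iii)$ (half of the first step), these give the four-way equivalence.

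For $(iii) \Leftrightarrow (iv)$, the key identity is $\Ima(T_{\X}(Y)) = \sum_i A_i \cdot \Ima(Y)$ for any PSD $Y$. Writing $Y = BB^T$, one has $\Ima(A_i Y A_i^T) = \Ima\bigl((A_iB)(A_iB)^T\bigr) = \Ima(A_iB) = A_i\cdot \Ima(Y)$, where the middle step uses $\ker(CC^T) = \ker(C^T)$. For PSD matrices $M_i$, the identity $x^T(\sum_i M_i)x = 0 \Rightarrow$ each $M_i x = 0$ gives $\ker(\sum_i M_i) = \bigcap_i \ker M_i$, hence $\Ima(\sum_i M_i) = \sum_i \Ima(M_i)$. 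Combining these yields the identity, so $\rk T_{\X}(Y) < \rk Y$ holds exactly when $\Ima(Y)$ is a shrunk subspace. Both directions of the equivalence now follow: take $Y = \Pi_U$ from a given shrunk subspace $U$, respectively $U = \Ima(Y)$ from a given rank-decreasing witness $Y$.

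For $(iii) \Rightarrow (ii)$, given a rank-decreasing PSD witness $Y_0$ of rank $r$ with $r' := \rk T_{\X}(Y_0) < r$, set $Y_\epsilon := Y_0 + \epsilon I$, which is positive definite for $\epsilon > 0$. Standard eigenvalue perturbation gives $\det Y_\epsilon \sim c\,\epsilon^{N-r}$ and $\det T_{\X}(Y_\epsilon) = O(\epsilon^{N-r'})$ as $\epsilon \to 0^+$. By the scale-invariance of $\det T_{\X}(\cdot)/\det(\cdot)$, we have $\cp(T_{\X}) = \inf_{Y \text{ PD}} \det T_{\X}(Y)/\det Y$; along my family, this ratio is $O(\epsilon^{r-r'}) \to 0$, forcing $\cp(T_{\X}) = 0$.

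For $(i) \Rightarrow (iv)$: by Remark \ref{SRSR-gen-ex}, any failure of $\sigma_0$-semi-stability is witnessed by some $(T_1,\ldots,T_m)$ with $D\cdot|\I_T| - n\sum_j \dim T_j > 0$. Under the identification $\RR^N = \bigoplus_{i\in[n]}\RR^D$ matching the column-block structure of the $V^{i,j}_{q,r}$, I would take $U := \bigoplus_{i \in \I_T}\RR^D$, so $\dim U = |\I_T|\cdot D$. Each Kraus operator admits the rank-one factorization $V^{i,j}_{q,r} = u_{i,j,q}\, e_r^T$, where $u_{i,j,q}$ is the vector supported only in row-block $q$ with value $v_i^j$. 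A direct computation then yields
\[
\textstyle\sum_A A\,U = \bigoplus_q \Span\{v_i^{j(q)} : i \in \I_T\},
\]
of dimension $n\sum_j \dim \Span\{v_i^j : i \in \I_T\} \leq n\sum_j \dim T_j < |\I_T|\cdot D = \dim U$, so $U$ is shrunk. This final step is where the main obstacle lies: one must carefully unpack the sparsity structure of the $V^{i,j}_{q,r}$ to verify that this natural coordinate subspace attains the correct dimension deficit; the other implications are essentially formal.
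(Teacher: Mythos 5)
Your proposal is correct, and the logical cycle $(ii)\Rightarrow(i)\Rightarrow(iv)\Rightarrow(iii)\Rightarrow(ii)$ does close once combined with Theorem \ref{CD1}. The core computation, $(iii)\Leftrightarrow(iv)$, is essentially the paper's: the paper decomposes $Y=\sum_j\lambda_j u_ju_j^T$ and shows $\dim\bigl(\sum_i A_i(U)\bigr)=\rk T_{\X}(Y)$ for $U=\Span(u_1,\ldots,u_r)$ in one direction and the inequality $\rk T_{\X}(Y')\le\dim\bigl(\sum_i A_i(U')\bigr)$ for the projection $Y'$ in the other; your packaging of both directions as the single identity $\Ima(T_{\X}(Y))=\sum_i A_i\cdot\Ima(Y)$ for PSD $Y$ (via $\ker(CC^T)=\ker(C^T)$ and $\ker(\sum M_i)=\bigcap\ker M_i$) is a slightly cleaner formulation of the same idea. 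Where you genuinely diverge is in handling $(ii)\Leftrightarrow(iii)$: the paper simply cites Gurvits' Lemma 4.5, whereas you prove only $(iii)\Rightarrow(ii)$ directly by the perturbation $Y_\epsilon=Y_0+\epsilon I$ and the degree count $\det(A+\epsilon B)=O(\epsilon^{N-\rk A})$, and then recover the missing direction by routing through $(ii)\Rightarrow(i)\Rightarrow(iv)\Rightarrow(iii)$. This requires your extra implication $(i)\Rightarrow(iv)$, whose explicit coordinate-subspace construction $U=\bigoplus_{i\in\I_T}\RR^D$ is sound (your dimension count $\dim\sum_A A(U)=n\sum_j\dim\Span\{v_i^j: i\in\I_T\}\le n\sum_j\dim T_j<D|\I_T|$ checks out against the block structure of the $V^{i,j}_{q,r}$) and in fact anticipates what the paper does later, in the optimality part of the proof of Theorem \ref{main-thm} and in Lemma \ref{subrep-witness-shrunk-general-lemma}(1). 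The net effect is that your argument is more self-contained (one fewer external citation) at the cost of an extra explicit construction that the paper defers to where it is actually needed.
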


\begin{proof}  By Theorem \ref{CD1}, we know that  $(i)$ and $(ii)$ are equivalent. The equivalence of $(ii)$ and $(iii)$ follows from \cite[Lemma 4.5]{Gurv04}.

For $(iii)$ implies $(iv)$, let $Y$ be a rank-decreasing witness for $T_{\X}$ of rank $r$. Then we can write it as
$$
Y=\sum_{j=1}^r \lambda_j u_j\cdot u_j^T
$$
where $\lambda_1, \ldots, \lambda_r>0$ and the vectors $u_1, \ldots, u_r$ form an orthonormal set. Setting $U:=\Span(u_1, \ldots, u_r)$, we get that $\dim U=\rk(Y)$, and 
\begin{align*} 
\dim \left( \sum_{i=1}^{\ell} A_i(U) \right) & =\dim \Span \left( A_iu_j\mid i\in[\ell], j \in [r] \right) \\
& = \rk \left ( \sum_{i,j} A_iu_j (A_iu_j)^T\right )=\rk T_{\X}(Y).
\end{align*} 
Thus \[\dim U-\dim \left( \sum_{i=1}^{\ell} A_i(U) \right) =\rk(Y)-\rk(T_{\X}(Y))>0,\]
i.e. $U$ is a shrunk subspace for $T_{\X}$.

For $(iv)$ implies $(iii)$, if $U'$ is a shrunk subspace for $T_{\X}$, let $Y'$ be the matrix of orthogonal projection onto $U'$. Write $Y'=Q\cdot Q^T$ where $Q$ is a matrix whose columns form an orthonormal basis for $U'$. Then $\rk(Y') = \dim U'$, and since $T_{\X}(Y') = \sum_{i = 1}^{\ell} (A_iQ) (A_iQ)^T$, it follows that
\[ \Ima(T_{\X}(Y')) \subseteq \sum_{i = 1}^{\ell} \Ima(A_i Q) = \sum_{i=1}^{\ell} A_i(U') \]
which gives
\[ \rk(T_{\X}(Y')) \leq \dim \left( \sum_{i=1}^{\ell} A_i(U') \right) < \dim U' = \rk(Y').\]  
Thus
$$
\rk(Y')-\rk(T_{\X}(Y'))\geq \dim U'-\dim \left( \sum_{i=1}^{\ell} A_i(U') \right)>0,
$$
i.e. $Y'$ is a rank-decreasing witness for $T_{\X}$. 
\end{proof} 

We will use rank-decreasing witnesses for $T_{\X}$ to construct an SRSR solution for $\X$ (see Lemma \ref{datarec}). The proof above shows that finding a rank-decreasing witness for $T_{\X}$ is equivalent to finding a shrunk-subspace. In order to compute shrunk-subspaces, we recall the notion of \emph{second Wong sequences}.

\begin{definition}  Let $\mathcal{A} = \{A_1, \ldots A_{\ell} \} \subset  \mathbb{R}^{N \times N}  $. 
For $B \in \mathbb{R}^{N \times N}$, the \emph{second Wong sequence} associated to $(\mathcal{A}, B)$ is the sequence of subspaces
\begin{align*}
W_0 & = 0  \\
W_1 & = \mathcal{A}(B^{-1}(W_0)) \\
W_2 & = \mathcal{A}(B^{-1}(W_1)) \\
 & \vdots \\
 W_k & = \mathcal{A}(B^{-1}(W_{k-1})) \\
\end{align*}
where for each $j$, $\mathcal{A}(B^{-1}(W_j)) = \langle A_1(B^{-1}(W_j)), \ldots, A_{\ell}(B^{-1}(W_j)) \rangle $
and $B^{-1}(W_j)$ is the preimage of $W_j$ under $B$. 
\end{definition}

\noindent It is not difficult to see that the $W_i$ lie in a chain, i.e. 
\[ W_0 \subseteq W_1 \subseteq W_2 \subseteq \ldots \]
Since we are working over finite-dimensional vector spaces, the second Wong sequence must converge, i.e. there exists a positive integer $L \leq N$ such that
\[W_0 \subseteq W_1 \subseteq \ldots \subseteq W_{L-1} \subseteq W_L = W_{L+1} = \ldots\]
 $W_L$ is called the \emph{limit} of the associated second Wong sequence and is denoted by $W^*$. 

The next result shows how shrunk subspaces can be computed from second Wong sequences. 

\begin{theorem}{\cite[Theorem 1 \& Lemma 9]{IKQS15}}\label{IKQS-thm-shrunk}
Let $\mathcal{A} = \{A_1, \ldots A_{\ell} \} \subset \RR^{N \times N}$ and $B \in \langle \mathcal{A} \rangle$. Then the following statements are equivalent:
\begin{enumerate}
\item $W^* \subseteq \Ima \; B$;

\item $B^{-1}(W^*)$ is a $\crk(B)$-shrunk subspace.
\end{enumerate}

\noindent If the operators $A_1, \ldots, A_{\ell}$ are assumed to be of rank one then $(1)$ and $(2)$ are further equivalent to:

\begin{enumerate}
\item[(3)] $B$ has maximal rank among the matrices in $\langle \A \rangle$. 
\end{enumerate}
In fact, under the assumption that $A_1, \ldots, A_{\ell}$ are of rank one, there exists a deterministic polynomial time algorithm to find a maximal rank matrix $B$ in $\langle \A \rangle$.
\end{theorem}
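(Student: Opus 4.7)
My plan is to prove the three-fold equivalence in stages, isolating the role of the rank-one hypothesis. The linchpin throughout will be the fixed-point identity $\A(B^{-1}(W^*)) = W^*$, which follows immediately from the stabilization $W_L = W_{L+1}$ defining $W^*$. With this in hand, $(1) \Leftrightarrow (2)$ is pure dimension counting and uses no special structure of the $A_i$: setting $U := B^{-1}(W^*)$, the preimage formula gives $\dim U = \crk(B) + \dim(W^* \cap \Ima B)$, while the fixed-point identity gives $\dim \A(U) = \dim W^*$. Subtracting, $\dim U - \dim \A(U) = \crk(B) - \dim(W^*/(W^* \cap \Ima B))$, which equals $\crk(B)$ exactly when $W^* \leq \Ima B$. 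Both directions drop out simultaneously.

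For $(2) \Rightarrow (3)$, I would invoke the standard bound that any $c$-shrunk subspace $U$ for $\A$ forces every $B' \in \langle \A \rangle$ to have corank at least $c$: since $B' = \sum_i \lambda_i A_i$, one has $B'(U) \subseteq \A(U)$, so $\dim \Ker(B'|_U) \geq c$. Applied to $U = B^{-1}(W^*)$, assumption $(2)$ yields $\crk(B') \geq \crk(B)$ for every $B' \in \langle \A \rangle$, so $B$ attains the maximal rank in $\langle \A \rangle$. Again no rank-one hypothesis is required.

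The substantial direction is $(3) \Rightarrow (1)$, where the rank-one assumption is essential. I would argue the contrapositive by an augmenting-chain rank-increase in the spirit of Edmonds' matroid intersection algorithm. Assuming $W^* \not\leq \Ima B$, pick $w \in W^* \setminus \Ima B$ and trace $w$ back through the chain $W_0 \leq W_1 \leq \cdots \leq W_L = W^*$ to obtain, at each level, preimage vectors $u_j$ and rank-one generators $A_{i_j} = p_{i_j} q_{i_j}^T$ that realise the successive Wong steps. Then assemble a perturbation $B_t = B + t\Delta$ with $\Delta \in \langle \A \rangle$ built telescopically from these generators so that $\rk B_t > \rk B$ for generic scalar $t$. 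The hard part will be this telescoping step: rank-one is precisely what makes the scalar coefficients $q_{i_j}^T u$ multiplicative, and it is what guarantees that the new image vector contributed by $w$ genuinely enlarges $\Ima B_t$ rather than cancelling against images already present in $\Ima B$. I expect to need an induction on the least $j$ with $w \in W_j$ to keep the bookkeeping under control, and to verify that the generic rank of $B_t$ strictly exceeds $\rk B$ rather than merely matching it.

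The algorithmic claim then reduces to iterating this flip. Start from any $B_0 \in \langle \A \rangle$ (for instance $B_0 = A_1$), compute the stabilization $W^*(B_0)$ (which occurs by step $N$ since dimensions are bounded), and test $W^*(B_0) \leq \Ima B_0$ by Gaussian elimination; if it fails, apply the construction from the previous paragraph to obtain $B_1$ with $\rk B_1 > \rk B_0$ and repeat. Since ranks are bounded by $N$, at most $N$ outer iterations are needed, each costing polynomially many arithmetic operations in $N$ and $|\A|$, which yields the deterministic polynomial-time procedure promised in the final sentence of the theorem.
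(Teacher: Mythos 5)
The paper does not actually prove this statement; it is imported verbatim from \cite[Theorem 1 \& Lemma 9]{IKQS15}, so the comparison can only be with your argument on its own terms. Your treatment of $(1)\Leftrightarrow(2)$ and of $(2)\Rightarrow(3)$ is correct and complete: the fixed-point identity $\A(B^{-1}(W^*))=W^*$ coming from stabilization, the preimage count $\dim B^{-1}(W^*)=\crk(B)+\dim(W^*\cap\Ima B)$, and the observation that a $c$-shrunk subspace forces $\crk(B')\geq c$ for every $B'\in\langle\A\rangle$ are exactly the standard arguments, and you correctly note that none of this uses the rank-one hypothesis (the paper records the same point in the remark following the theorem).

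The genuine gap is in $(3)\Rightarrow(1)$, which is the only place rank one does any work and is the real content of the cited Lemma 9. You reduce it to producing, from a witness $w\in W^*\setminus\Ima B$, a matrix $\Delta\in\langle\A\rangle$ with $\rk(B+t\Delta)>\rk(B)$ for generic $t$, but the ``telescoping'' assembly of $\Delta$ from the rank-one generators along the Wong chain is precisely the step you defer (``I expect to need\dots'', ``the hard part will be\dots''). As written, nothing verifies that the new image direction contributed by $w$ is not cancelled against $\Ima B$, nor that the induction on the least $j$ with $w\in W_j$ actually closes; this is where a non-rank-one family genuinely fails, so the argument cannot be waved through. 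The deterministic algorithm in the final sentence is exactly the iteration of this rank-increase step, so the algorithmic claim inherits the same gap: without an explicit, provably correct augmentation you have established neither the equivalence with $(3)$ nor a polynomial-time procedure. To finish, you would need to carry out the IKQS-style augmentation in detail, or do what the paper does and cite it.
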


\begin{rmk} We point out that in general $(1)$ or $(2)$ in Theorem \ref{IKQS-thm-shrunk} implies that the matrix $B$ is of maximal rank. 
\end{rmk}

In the context of SRSR, since the Kraus operators for $T_{\X}$ are all of rank one, Theorem \ref{IKQS-thm-shrunk} tells us that computing a shrunk subspace comes down to constructing a matrix $B \in \langle \A_{\X} \rangle$ of maximal rank. Furthermore it also provides a deterministic polynomial time algorithm for finding such a maximal rank matrix. However, from a practical standpoint, it is rather difficult to implement. In what follows, we are going to describe an algorithm that constructs a shrunk subspace with high probability.

\begin{prop} \label{prop-prob-alg} Let $\A_{\X}=\{A_1, \ldots, A_{\ell}\}$ be the set of Kraus operators for $T_{\X}$ where $\ell=n^2mD$ and let $S \subseteq \RR$ be any finite set of size at least $  \frac{2n^2D^2}{\epsilon}$ with $\epsilon>0$. 

If $\alpha_1, \ldots, \alpha_{\ell}$ are chosen independently and uniformly at random from $S$ and $B:=\alpha_1 A_1+\ldots+\alpha_{\ell} A_{\ell} \in \langle \A_{\X} \rangle$ then
\[
\mathbb{P}\left( B^{-1}(W^*) \text{~is a~} \crk(B)\text{-shrunk-subspace for~} \A_{\X}\right) \geq 1-\epsilon.
\]
\end{prop}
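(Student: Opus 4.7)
The plan is to reduce the question to showing that $B$ has maximal rank in the linear span $\langle \A_{\X} \rangle$ with high probability, and then to apply the Schwartz-Zippel Lemma to the minor polynomials of $B$.

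First I would observe that each Kraus operator $V^{i,j}_{q,r} \in \A_{\X}$ has rank one, since it is a block matrix whose only nonzero block is the column vector $v_i^j$. This is the hypothesis needed to invoke the rank-one case of Theorem \ref{IKQS-thm-shrunk}. Consequently, for any $B \in \langle \A_{\X}\rangle$, the subspace $B^{-1}(W^*)$ is a $\crk(B)$-shrunk subspace for $\A_{\X}$ if and only if $B$ attains the maximal rank $r^*:=\max\{\rk C \mid C \in \langle \A_{\X}\rangle\}$. The entire problem therefore becomes: bound the probability that a uniformly random linear combination $B = \sum_{i=1}^{\ell}\alpha_i A_i$ has rank strictly less than $r^*$.

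Next, I would view $B = B(\alpha_1,\ldots,\alpha_\ell)$ as an $N \times N$ matrix (with $N=nD$) whose entries are homogeneous linear forms in $\alpha_1,\ldots,\alpha_\ell$, and pick an $r^* \times r^*$ submatrix of $B$ whose determinant $f \in \RR[\alpha_1,\ldots,\alpha_\ell]$ is not identically zero. Such a submatrix exists by the definition of $r^*$. Since each entry of $B$ is linear in the $\alpha_i$'s, the polynomial $f$ has total degree at most $r^* \leq N = nD$. Whenever $f(\alpha_1,\ldots,\alpha_\ell) \neq 0$, the random matrix $B$ has rank at least $r^*$, hence exactly $r^*$, which by the reduction above guarantees that $B^{-1}(W^*)$ is a $\crk(B)$-shrunk subspace.

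Finally, I would apply the Schwartz-Zippel Lemma to $f$: when $\alpha_1,\ldots,\alpha_\ell$ are drawn i.i.d.\ uniformly from a finite set $S \subseteq \RR$, the probability that $f$ vanishes is at most $\deg(f)/|S| \leq nD/|S|$. Using the hypothesis $|S| \geq 2n^2D^2/\epsilon$, this bound is at most $\epsilon/(2nD) \leq \epsilon$, which yields the desired inequality. The main conceptual step is the first one, namely recognizing that the rank-one hypothesis in Theorem \ref{IKQS-thm-shrunk} applies to $\A_{\X}$ so that the problem collapses to a generic maximal-rank problem; the remainder is a routine application of Schwartz-Zippel together with the crude estimate $r^* \leq N = nD$, and the slack between $nD/\epsilon$ and $2n^2D^2/\epsilon$ simply leaves comfortable room in the bound.
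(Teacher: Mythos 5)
Your proof is correct and follows essentially the same route as the paper: reduce via the rank-one case of Theorem \ref{IKQS-thm-shrunk} to showing that $B$ attains the maximal rank in $\langle \A_{\X}\rangle$ with high probability, then apply the Schwartz--Zippel Lemma to a minor polynomial. The only cosmetic difference is that you use a single nonvanishing $r^{*}\times r^{*}$ minor of degree at most $nD$, whereas the paper uses the sum of squares of all maximal minors with the cruder degree bound $2n^{2}D^{2}$; both fit under the hypothesis $|S|\geq 2n^{2}D^{2}/\epsilon$.
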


\begin{proof} Let $\vv$ be the subspace of $\RR^{N \times N}$ spanned by $\A_{\X}$. Since $\A_{\X}$ is a linearly independent set of matrices, it is a basis for $\vv$, and thus we can identify $\vv$ with $\RR^{\ell}$. 

Let us denote by $r$ the maximal rank of the matrices in $\vv$, and let $P \in \RR[t_1, \ldots, t_{\ell}]$ be the non-zero polynomial defined by
\[
P(t_1, \ldots, t_{\ell})=\sum \det(A')^2
\]
where the sum is over all $r \times r$ submatrices $A'$ of $t_1A_1+\ldots+t_{\ell}A_{\ell}$. Note that the total degree of this polynomial is at most $2r^2\leq 2N^2=2n^2D^2$. 

Then, for any $(\alpha_1, \ldots, \alpha_{\ell}) \in \RR^{\ell}$, we have that
$$
\alpha_1A_1+\ldots+\alpha_{\ell}A_{\ell} \in \vv \text{~is not of maximal rank~} \Longleftrightarrow P(\alpha_1, \ldots, \alpha_{\ell})=0.
$$
At this point we can use the Schwartz-Zippel Lemma \cite{Schwartz-lemma-1980, Zippel-lemma-1979} to conclude that
\begin{equation} \label{S-Z-ineq}
\mathbb{P}(B \text{~is not of maximal rank}) \leq \frac{\deg(P)}{|S|}\leq \epsilon
\end{equation}

It now follows from Theorem \ref{IKQS-thm-shrunk} and $(\ref{S-Z-ineq})$ that
$$
\mathbb{P}\left( B^{-1}(W^*) \text{~is a~} \crk(B)\text{-shrunk-subspace for~} \A_{\X}\right)=1-\mathbb{P}\left(~ \parbox{5em}{$B$ is not of\\ maximal rank} \right) \geq 1-\epsilon,
$$
and this finishes the proof.
\end{proof}

We are thus led to the following simple, efficient probabilistic algorithm for computing shrunk-subspaces for $\A_{\X}$. This algorithm returns an output with high probability 
by Proposition \ref{prop-prob-alg}. In what follows, we denote by $\crk(\X)$ the corank of a matrix $B \in \langle \A_{\X} \rangle$ of maximal rank.

\vspace{.1in}

\begin{algP}[label={algPlab}]{}{}
\textbf{Input}: Kraus operators $\A_{\X}=\{A_1, \ldots, A_{\ell}\}$ with ${\ell}=n^2mD$, and a sample set $S \subseteq \RR$ of size at least $ \displaystyle \frac{2n^2D^2}{\epsilon}$. \\
\textbf{Output}: $\crk(\X)$-shrunk-subspace for $\A_{\X}$ 
\begin{enumerate}
\vspace{.1in}
\item Choose $\alpha_1, \ldots, \alpha_{\ell}$ independently and randomly distributed from $S$, and set $B:=\alpha_1 A_1+\ldots+\alpha_{\ell} A_{\ell}$.
\vspace{.1in}
\item Compute $W^*$, the limit of the second Wong sequence associated to $(\A_{\X}, B)$.
\vspace{.1in}
\item If $W^* \subseteq \Ima B$, output $B^{-1}(W^*)$ as a $\crk(\X)$-shrunk subspace for $\A_{\X}$. Otherwise, return to (1).
\end{enumerate}
\end{algP}

\vspace{.1in}

\begin{remark} $(1)$~We point out that Proposition \ref{prop-prob-alg} can be restated to say that for a randomly chosen matrix $B$ from $\vv:=\langle \A_{\X} \rangle$, the subspace $B^{-1}(W^*)$ is a $\crk(B)$-shrunk-subspace for $\A_{\X}$ with probability \emph{one}. Indeed, as in the proof of Proposition \ref{prop-prob-alg}, we have the non-zero polynomial $P \in \RR[t_1, \ldots, t_{\ell}]$ such that for any $(\alpha_1, \ldots, \alpha_{\ell}) \in \RR^{\ell}$,
\[
\alpha_1A_1+\ldots+\alpha_{\ell}A_{\ell} \in \vv \text{~is not of maximal rank~} \Longleftrightarrow P(\alpha_1, \ldots, \alpha_{\ell})=0.
\]
So the subset of $\vv$ consisting of all those matrices which are not of maximal rank is identified with the zero set of $P$ in $\RR^{\ell}$. This zero set, denoted by $\mathbb{V}(P)$, has Lebesgue measure zero in $\RR^{\ell}$. 

Now, let $(\alpha_1, \ldots, \alpha_{\ell}) \in \RR^{\ell}$ be a coefficient vector drawn from any probability distribution absolutely continuous with respect to the Lebegue measure on $\RR^{\ell}$ (e.g. Gaussian distribution). If $B:=\alpha_1A_1+\ldots+\alpha_{\ell}A_{\ell} \in \vv$ then
\[
\mathbb{P}(B \text{~is not of maximal rank})=\mathbb{P}((\alpha_1, \ldots, \alpha_{\ell}) \in \mathbb{V}(P))=0,
\]
proving that a randomly chosen matrix $B$ from $\langle \A_{\X} \rangle$ has maximal rank with probability one. This fact combined with Theorem \ref{IKQS-thm-shrunk} yields the desired claim.

\bigskip 

$(2)$~Let $\vv$ be an arbitrary matrix space of dimension $\ell \geq 1$. Then, using the same arguments as above, one can prove the following statements.

\begin{enumerate}[(a)]
\item There exists a random process (based on the Schwartz-Zippel Lemma) that selects a random matrix from $\vv$ of maximal rank with high probability (see the proof of Proposition \ref{prop-prob-alg}).

\item There exists a random process (based on absolutely continuous probability distributions) that selects a random matrix from $\vv$ of maximal rank with probability one.
\end{enumerate}

From an implementation standpoint, the random process in $(a)$ is more practical than that in $(b)$. 

\end{remark}

\section{Finding an SRSR solution} 
In this section, $\X=(\X^1, \ldots, \X^m)$ is a partitioned data set with $\X^j=\{v^j_1, \ldots, v^j_n\} \subseteq \RR^{d_j}$, $j \in [m]$. Let $\V$ be the representation of the complete bipartite quiver $\K_{n,m}$ corresponding to $\X$. We denote by $T_{\X}$ the cpo associated to the representation $\V$. We will be working with the weight $\sigma_0$ of $\K_{n,m}$ defined by
$$
\sigma_0(x_i)=D, \forall i \in [n], \text{~and~} \sigma_0(y_j)=-n, \forall j \in [m],
$$
where $D=\sum_{j \in [m]} d_j$. 

We begin by explaining how to construct a subrepresentation of $\V$ from a given rank-decreasing witness for $T_{\X}$.

\begin{lemma}\label{datarec}  Let $\V$ be the representation of $\K_{n,m}$ associated to the partitioned data set $\X$ such that $\cp(T_{\X})=0$. Let $Y=(y_{q,r})_{q,r}$ be an $N \times N$ rank-decreasing witness for $T_{\X}$ where $N=nD$. Set
\[ \I = \{ i \in [n] \mid y_{r,r}\neq 0 \text{~for some~} r \in \I_i \}, \]
where $\I_i=[(i-1)D+1, iD]$ for every $i \in [n]$. Let  $W$ be the subrepresentation of $\V$ defined by 
\[
 W(x_i) = \begin{cases} \mathbf{0} & i \notin \I \\ \RR & i \in \I \end{cases} \; \; \mbox{ for } i \in [n], \;\;  and \;\; W(y_j) = \Span \left  ( v_i^j | i \in \I  \right )   \; \; \mbox{ for } j \in [m]. 
\]
Then $$\sigma_0 \cdot \ddim W\geq \rk(Y)-\rk(T_{\X}(Y))>0.$$
\end{lemma}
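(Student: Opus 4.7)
The plan is to prove the chain $\sigma_0 \cdot \ddim W \geq \rk(Y) - \rk(T_{\X}(Y)) > 0$ by interpreting both sides directly in terms of $Y$ and the Kraus operators. Unpacking the definition, $\sigma_0 \cdot \ddim W = D|\I| - n \sum_{j=1}^m \dim W(y_j)$, so I will produce an upper bound $\rk(Y) \leq D|\I|$ matching the first term and an exact formula $\rk(T_{\X}(Y)) = n\sum_{j=1}^m \dim W(y_j)$ matching the second. The strict positivity $\rk(Y) - \rk(T_{\X}(Y)) > 0$ is simply the rank-decreasing hypothesis on $Y$.

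For the upper bound on $\rk(Y)$, I would use that every positive semi-definite matrix satisfies $|y_{r,s}|^2 \leq y_{r,r} y_{s,s}$, so a vanishing diagonal entry kills its entire row and column. Hence $\rk(Y) \leq |\{r \in [N] : y_{r,r} \neq 0\}|$, and the definition of $\I$ forces $\{r : y_{r,r} \neq 0\} \subseteq \bigsqcup_{i \in \I} \I_i$, a set of cardinality $D|\I|$.

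The main work is computing $\rk(T_{\X}(Y))$, and this rests on the observation that each Kraus operator admits the rank-one factorization $V^{i,j}_{q,r} = u_{i,j,q}\, e_r^T$, where $u_{i,j,q} \in \RR^N$ is obtained by planting $v_i^j$ in the $q$-th row block (of height $d_j$) and filling zeros elsewhere --- crucially, this vector is independent of $r$. Consequently
$$
T_{\X}(Y) = \sum_{(i,j,q)} \Bigl(\sum_{r \in \I_i} y_{r,r}\Bigr) u_{i,j,q} u_{i,j,q}^T,
$$
and because each summand is positive semi-definite, the image of $T_{\X}(Y)$ is spanned by those $u_{i,j,q}$ whose coefficient is strictly positive --- equivalently, those with $i \in \I$. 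I would then exploit the disjointness of row blocks: for fixed $(j,q)$ the vectors $\{u_{i,j,q}\}_{i \in \I}$ live in a common $d_j$-dimensional coordinate subspace and inside it reduce to $\{v_i^j\}_{i \in \I}$, whose span is $W(y_j)$; as $q$ ranges over its $n$ admissible values and $j$ over $[m]$, the supports are pairwise disjoint, so the spans add directly and yield $\rk(T_{\X}(Y)) = n\sum_{j=1}^m \dim W(y_j)$.

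The main obstacle is the bookkeeping in this middle step: verifying the factorization $V^{i,j}_{q,r} = u_{i,j,q} e_r^T$ from the block-matrix definition and recognizing that $u_{i,j,q}$ has no $r$ dependence, so that the $r$-summation collapses cleanly to the indicator of $i \in \I$. Once this is in hand, the block structure built into $\s_{\sigma_0}$ forces the direct-sum decomposition of $\Ima T_{\X}(Y)$, and combining the two estimates completes the proof.
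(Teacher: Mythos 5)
Your proposal is correct and follows essentially the same route as the paper's proof: both bound $\rk(Y)$ by the number of nonzero diagonal entries (hence by $D|\I|$, using positive semi-definiteness), and both exhibit $T_{\X}(Y)$ as a block-diagonal sum of positive semi-definite rank-one terms whose image in each of the $n$ blocks attached to sink $j$ is exactly $W(y_j)$, giving $\rk(T_{\X}(Y)) = n\sum_{j=1}^m \dim W(y_j)$. Your explicit factorization $V^{i,j}_{q,r} = u_{i,j,q}\,e_r^T$ is only implicit in the paper's block computation, but the argument is the same in substance.
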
  

\begin{proof} It is clear that
\begin{equation} \label{rkeq1}  
\rk(Y) \leq \sum_{i \in [n]} \sum_{r \in \I_i} \rk(y_{r,r}) \leq \sum_{i \in [n]} D \cdot \dim W(x_i).
\end{equation} 

\noindent On the other hand, $T_{\X}(Y)$ turns out to be an $(mn) \times (mn)$ block diagonal matrix whose $(q,q)$-block entry is 
$$
\sum_{i=1}^n v_i^j\cdot \left(\sum_{r \in \I_i} y_{r,r}\right) \cdot (v_i^j)^T 
$$
for every $q \in [(j-1)n+1, jn] $ and $j \in [m]$. Thus we get that
\begin{align*} \rk(T_{\X}(Y) ) & = \sum_{j=1}^m n \rk \left ( \sum_{i=1}^n \left ( v_i^j \left(\sum_{r \in \I_i} y_{r,r}\right) (v_i^j)^T \right ) \right ) \numberthis \label{rkeq2} \\
& = \sum_{j=1}^m n \rk \left ( \sum_{i,r} \sqrt{y_{r,r}} v_i^j (\sqrt{y_{r,r}}v_i^j)^T\right ) \\
& = \sum_{j=1}^m n \dim \Span \left ( \sqrt{y_{r,r}} v_i^j \mid i \in [n], r \in \I_i \right ) \\
& = - \sum_{j=1}^m  \sigma_0(y_j) \dim W(y_j)
 \end{align*} 

From (\ref{rkeq1}) and (\ref{rkeq2}), it follows that 

\begin{equation} 
\sum_{i \in [n]} \sigma_0(x_i) \dim W(x_i) \geq \rk(Y) > \rk(T_{\X}(Y)) = - \sum_{j \in [m]}  \sigma_0(y_j) \dim W(y_j),
\end{equation}
which proves our claim. 
\end{proof}

We are now ready to prove our main result.

\begin{proof}[Proof of Theorem \ref{main-thm}] Let $\X$ be a partitioned data set, $\V$ the associated representation of $\K_{n,m}$, and $T_{\X}$ the associated cpo. Since $\A_{\X}$ consists of matrices of rank one, $\A_{\X}$ has the so-called Edmonds-Rado property (see \cite{Lov-ERP-R1-1989}), meaning that $\cp(T_{\X})>0$ if and only if $\crk(\X)=0$. (Recall that $\crk(\X)$ denotes the corank of a matrix $B \in \langle \A_{\X} \rangle$ of maximal rank.) It follows from Theorem \ref{CD1} and Proposition \ref{equiv-cap-0} that the following statements are equivalent:
\begin{itemize}
\item $\X$ has an SRSR solution;
\item $\cp(T_{\X})=0$;
\item $\crk(\X)>0$.
\end{itemize}

Thus, to determine whether or not $\X$ has an SRSR solution, we can use either Algorithm \ref{algGlab}, which runs in deterministic polynomial time, or the probabilistic Algorithm \ref{algPlab}.

Assume now that $\X$ has an SRSR solution. Then we can compute a $\crk(\X)$-shrunk subspace $U$ for $T_{\X}$ by the deterministic polynomial time algorithm in Theorem \ref{IKQS-thm-shrunk} or by the probabilistic Algorithm \ref{algPlab}. More precisely, the output $U$ is of the form $B^{-1}(W^*)$ where $B \in \langle \A_{\X} \rangle$ is of maximal rank and $W^*$ is the limit of the second Wong sequence associated to $(\A_{\X}, B)$.

Next, let $Y=Q\cdot Q^T$ where $Q$ is a matrix whose columns form an orthonormal basis for $U$. Then, according to Proposition \ref{equiv-cap-0}, $Y$ is a rank-decreasing witness for $T_{\X}$. By Lemma \ref{datarec}, we can finally use $Y$ to find a subspace $W \subseteq \V$ such that 
\begin{equation} 
\label{subrep-max-crk-eqn} \sigma_0 \cdot \ddim W \geq \rk(Y)-\rk(T_{\X}(T)) \geq \crk(\X)> 0. 
\end{equation}
For each $j \in [m]$,  define $T_j = W(y_j)$. Then (\ref{subrep-max-crk-eqn}) yields
\[|\I_T| > \frac{ \sum_{j \in [m]} \dim T_j}{D} n, \] 
and so $(T_1, \ldots, T_m)$ is an SRSR solution. 

It remains to show that $(T_1, \ldots, T_m)$ is an optimal SRSR solution So let $(T'_1, \ldots, T'_m)$ be any SRSR solution for $\X$, and let $W'$ be the corresponding subrepresentation of $\V$ (see Remark \ref{SRSR-gen-ex}). Then there exists a $c'$-shrunk subspace for $T_{\X}$ with $c'=\sigma_0\cdot \ddim W'$. Indeed, let $Y'$ be the $N\times N$ diagonal matrix whose $r$th diagonal entry is one if $r \in [(i-1)D+1, iD]$ with $i \in \I_{T'}$, and zero otherwise. It is easy to check that $Y'$ is a rank-decreasing witness for $T_{\X}$ with
$$
\rk(Y')-\rk(T_{\X}(Y')) \geq \sigma_0 \cdot \ddim (W').
$$
From Proposition \ref{equiv-cap-0} it follows that if $U'$ denotes the shrunk subspace corresponding to $Y'$ then 
\begin{equation} \label{subrep-shrunk-eqn}
\dim U'-\dim \left(\sum_{i=1}^l A_i(U') \right)= \rk(Y')-\rk(T_{\X}(Y'))\geq \sigma_0 \cdot \ddim (W'),
\end{equation}
where $\A_{\X}=\{A_1, \ldots, A_l\}$, i.e. $U'$ is a $\sigma_0\cdot \ddim (W')$-shrunk subspace.

On the other hand, we know that
\begin{equation}\label{crk-disc-ineq}
\crk(\A_{\X}) \geq \max \{c \in \NN \mid \text{there exists a~}c\text{-shrunk subspace for~} \A_{\X}\}
\end{equation}
(This inequality holds for any collection of matrices $\A$ with the maximum on the right hand side also known as the discrepancy of $\A$; see for example \cite{IKQS15}.) Using $(\ref{subrep-max-crk-eqn})$, $(\ref{subrep-shrunk-eqn})$, and $(\ref{crk-disc-ineq})$ we get that
\begin{align*}
\sigma_0\cdot \ddim W & \geq \sigma_0 \cdot \ddim W'\\  
&\rotatebox[origin=c]{-90}{$\Leftrightarrow$} \notag \\ 
D \cdot  |\I_T|- n \cdot \left( \sum_{j=1}^m \dim T_j \right)& \geq D \cdot  |\I_{T'}|- n \cdot \left( \sum_{j=1}^m \dim T'_j \right) \\  
&\rotatebox[origin=c]{-90}{$\Leftrightarrow$} \notag \\ 
|\I_T|- {\sum_{j=1}^m \dim T_j \over D} \cdot n& \geq  |\I_{T'}|- {\sum_{j=1}^m \dim T'_j \over D} \cdot n.
\end{align*}
This finishes the proof.
\end{proof} 

\section{Semi-stability of quiver representations}\label{general-semi-stab-sec}
In this section, we address the general problem of effectively deciding whether a quiver representation is semi-stable (with respect to a weight) and, if it is not, finding a subrepresentation that certifies that the representation is not semi-stable.

Let $Q=(Q_0, Q_1, t, h)$ be an arbitrary connected bipartite quiver (not necessarily complete). This means that $Q_0$ is the disjoint union of two subsets $Q^+_0=\{x_1, \ldots, x_n\}$ and $Q^-_0=\{y_1, \ldots, y_m\}$, and all arrows in $Q$ go from $Q^+_0$ to $Q^-_0$. (We do allow multiple arrows between the vertices of $Q$.)  

Let $\dd \in \NN^{Q_0}$ be a dimension vector of $Q$. The representation space of $\dd$-dimensional representations of $Q$ is the vector space
$$
\rep(Q,\dd):=\prod_{a \in Q_1} \RR^{\dd(ha)\times \dd(ta)}.
$$
It is clear that any $V \in \rep(Q,\dd)$ gives rise to a representation of $Q$ of dimension vector $\dd$ and vice versa. 

Let $\sigma \in \ZZ^{Q_0}$ be a weight of $Q$ such that $\sigma$ is positive on $Q^{+}_0$, negative of $Q^{-}_0$, and
\begin{equation} \label{ortho-wts-eqn}
\sigma \cdot \dd=\sum_{z \in Q_0}\sigma(z)\dd(z)=0.
\end{equation}
Define
$$\sigma_{+}(x_i)=\sigma(x_i), \forall i \in [n], \text{~and~}
\sigma_{-}(y_j)=-\sigma(y_j), \forall j \in [m].
$$
Then $(\ref{ortho-wts-eqn})$ is equivalent to
$$
N:=\sum_{i=1}^n \sigma_{+}(x_i)\dd(x_i)=\sum_{j=1}^m \sigma_{-}(y_j)\dd(y_j).
$$

Let $M:=\sum_{j=1}^m \sigma_{-}(y_j) \text{~and~} M':=\sum_{i=1}^n \sigma_{+}(x_i).$
For each $j \in [m]$ and $i \in [n]$, define 

$$
\jj:=\{q \in \ZZ \mid \sum_{k=1}^{j-1} \sigma_{-}(y_k) < q \leq  \sum_{k=1}^{j} \sigma_{-}(y_k) \},
$$
and 
$$
\ii:=\{r \in \ZZ \mid \sum_{k=1}^{i-1} \sigma_{+}(x_k) < r \leq  \sum_{k=1}^{i} \sigma_{+}(x_k) \}.
$$
In what follows, we consider $M \times M'$ block matrices of size $N \times N$ such that for any two indices $q \in \jj$ and $r \in \ii$, the $(q,r)$-block-entry is a matrix of size $\dd(y_j)\times \dd(x_i)$. Set
$$
\s_{\sigma}:= \{(i,j,a,q,r) \mid i \in [n], j \in [m],\\ a \in \ar_{i,j}, \\ q \in \jj, r \in \ii \},
$$
where $\ar_{i,j}$ denotes the set of all arrows in $Q$ from $x_i$ to $y_j$, $i \in [n]$ and $j \in [m]$. 

Let $V \in \rep(Q,\dd)$ be a $\dd$-dimensional representation. For each index $(i,j,a,q,r) \in \s_{\sigma}$, let $V^{i,j,a}_{q,r}$ be the $M \times M'$ block matrix of size $N \times N$ whose $(q,r)$-block-entry is $V(a) \in \RR^{\dd(y_j)\times \dd(x_i)}$, and all other entries are zero. We denote by $\A_{V, \sigma}$ the set of all matrices $V^{i,j,a}_{q,r}$ with $(i,j,a,q,r) \in \s_{\sigma}$.

The Brascamp-Lieb operator associated to the quiver datum $(V, \sigma)$, defined in \cite{ChiDer-2019}, is the cpo $T_{V, \sigma}$ with Kraus operators $V^{i,j,a}_{q,r}$, $(i,j,a,q,r) \in \s_{\sigma}$, i.e.
\begin{align*}
T_{V,\sigma}(X):=\sum_{(i,j,a,q,r)} V^{i,j,a}_{q,r} \cdot X \cdot (V^{i,j,a}_{q,r})^T, \forall X \in \RR^{N \times N}.
\end{align*}

It has been proved in \cite[Theorem 1]{ChiDer-2019} that
$$
V \text{~is~} \sigma\text{-semi-stable} \Longleftrightarrow \cp(T_{V, \sigma})>0.
$$
Consequently, as indicated in \cite[Section 1.2]{ChiDer-2019}, one can use Algorithm \ref{algGlab} to test whether $V$ is $\sigma$-semi-stable or not. 

\begin{rmk}\label{Algo-P-general-reps-rmk1} In the context of arbitrary quivers and dimension vectors, when it comes to Algorithm \ref{algPlab}, even if $B$ is chosen to have maximal rank, it is not guaranteed that $W^* \subseteq \Ima B$. This is because for arbitrary dimension vectors, $\A_{V, \sigma}$ may contain matrices that are not of rank one. Thus Algorithm \ref{algPlab} might not produce an output.
\end{rmk}

The following lemma allows us to go back and forth between subrepresentations on the one hand, and  rank-decreasing witnesses and $c$-shrunk subspaces on the other. It plays a key role in the proof of Theorem \ref{main-thm-2}

\begin{lemma} \label{subrep-witness-shrunk-general-lemma} Let $Q$ be a bipartite quiver and $(V, \sigma)$ a quiver datum as above. Let $\A_{V, \sigma}=\{A_1, \ldots, A_L\}$ be the set of Kraus operators associated to $(V, \sigma)$.
\begin{enumerate}
\item Let $W$ be a subrepresentation of $V$. For each $i \in [n]$, after choosing an orthonormal basis $u^i_1, \ldots, u^i_{\ff(i)}$  for $W(x_i) \subseteq \RR^{\dd(x_i)}$, define
$$
Y_r:=\sum_{l=1}^{\ff(i)} u^i_l \cdot (u^i_l)^T, \forall r \in \I^{+}_i.
$$ 
Then  $Y:=\bigoplus_{i \in [n]} \bigoplus_{r \in \I^+_i} Y_r \in \RR^{N \times N}$ is a positive semi-definite matrix such that
$$
\rk(Y)-\rk(T_{V, \sigma}(Y)) \geq \sigma \cdot \ddim W.
$$
Moreover, if $U \subseteq \RR^N$ is the subspace associated to $Y$ (see Proposition \ref{equiv-cap-0}) then
$$
\dim U - \dim \left( \sum_{i=1}^L A_i(U) \right) \geq \sigma \cdot \ddim W.
$$

\item Let $Y \in \RR^{N \times N}$ be a positive semi-definite matrix. Viewing $Y$ as an $M' \times M'$ block matrix, denote its block diagonal entries by $Y_r \in \RR^{\dd(x_i)\times \dd(x_i)}$ with $r \in \I^{+}_i$ and $i \in [n]$. For each such $r$ and $i$, write 
$$
Y_r=\sum_{l=1}^{d_{i,r}} \lambda^{i,r}_l u^{i,r}_l \cdot (u^{i,r}_l)^T,
$$
where $d_{i,r}$ is the rank of $Y_r$, the $\lambda^{i,r}_l$ are positive scalars, and the $u^{i,r}_l$ form an orthonormal set of vectors in $\RR^{\dd(x_i)}$. Define
$$
W(x_i):=\Span\left( \sqrt{\lambda^{i,r}_l} u^{i,r}_l \mid r \in \I^{+}_i, l \in [d_{i,r}]\right), \forall i \in [n], 
$$
and 
$$
W(y_j):=\sum_{i \in [n]} \sum_{a \in \ar_{i,j}} V(a)(W(x_i)), \forall j \in [m].
$$
Then $W:=(W(x_i), W(y_j))_{i \in [n], j \in [m]}$ is a subrepresentation of $V$ such that
$$
\sigma \cdot \ddim W \geq \rk(Y)-\rk(T_{V, \sigma}(Y)).
$$
\item Let $U \subseteq \RR^N$ be a $c$-shrunk subspace for $T_{V,\sigma}$ for some $c \in \NN$, and let $Y \in \RR^{N \times N}$ be the matrix of orthogonal projection onto $U$. If $W$ is the subrepresentation of $V$ associated in $(2)$ to $Y$ then
$$
\sigma \cdot \ddim W \geq c.
$$
\end{enumerate}
\end{lemma}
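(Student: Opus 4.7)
The plan is to prove the three parts by a single block-matrix calculation that translates between subrepresentations on one side and PSD matrices/shrunk subspaces on the other. The central observation is that every Kraus operator $V^{i,j,a}_{q,r}$ has a unique nonzero block at position $(q,r)$, so for any input matrix $X$, $T_{V,\sigma}(X)$ is block-diagonal and its $(q,q)$-block for $q\in\I^{-}_j$ equals $\sum_{i,a\in\ar_{i,j},r\in\I^{+}_i}V(a)\cdot X_{r,r}\cdot V(a)^T$; in particular $T_{V,\sigma}$ depends on $X$ only through its block-diagonal entries. I will use this identity repeatedly.

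For part (1), I set $Y$ to be the block-diagonal matrix described in the statement; since each $Y_r$ (for $r\in\I^{+}_i$) is the orthogonal projection onto $W(x_i)$, we immediately read off $\rk(Y)=\sum_i \sigma_{+}(x_i)\dim W(x_i)$. The opening observation gives a closed form for $T_{V,\sigma}(Y)$, and since $W$ is a subrepresentation, the image of each $(q,q)$-block is contained in $\sum_{i,a\in\ar_{i,j}}V(a)(W(x_i))\subseteq W(y_j)$, forcing $\rk(T_{V,\sigma}(Y))\leq\sum_j\sigma_{-}(y_j)\dim W(y_j)$. Subtraction yields the required bound. For the subspace version, I take $U:=\Ima Y$, which is the internal direct sum of copies of $W(x_i)$ placed at the blocks indexed by $r\in\I^{+}_i$, and reuse the proof of Proposition~\ref{equiv-cap-0} to identify $\dim U=\rk(Y)$ and $\dim\sum_l A_l(U)=\rk(T_{V,\sigma}(Y))$.

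For part (2), I first compute $\rk(T_{V,\sigma}(Y))$ exactly: expanding $Y_r=\sum_l\lambda^{i,r}_l u^{i,r}_l(u^{i,r}_l)^T$, the $(q,q)$-block of $T_{V,\sigma}(Y)$ becomes a positive linear combination of outer products of the vectors $V(a)u^{i,r}_l$, whose span is exactly $\sum_{i,a\in\ar_{i,j}}V(a)(W(x_i))=W(y_j)$; this simultaneously certifies that $W$ is a subrepresentation and gives $\rk(T_{V,\sigma}(Y))=\sum_j\sigma_{-}(y_j)\dim W(y_j)$. For $\rk(Y)$, the key analytic ingredient is the PSD rank inequality $\rk(Y)\leq\sum_r\rk(Y_{r,r})$, which I prove by writing $Y=XX^T$ and noting that the row space of $X$ is contained in the sum of the row spaces of its block rows, each of the latter having dimension $\rk(Y_{r,r})$. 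Combined with $\rk(Y_r)=d_{i,r}\leq\dim W(x_i)$, we conclude $\rk(Y)\leq\sum_i\sigma_{+}(x_i)\dim W(x_i)$ and the inequality follows.

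Part (3) is then an immediate corollary: applying part (2) to $Y$, the orthogonal projection onto the $c$-shrunk subspace $U$, gives
\[
\sigma\cdot\ddim W\ \geq\ \rk(Y)-\rk(T_{V,\sigma}(Y))\ =\ \dim U-\dim\sum_l A_l(U)\ \geq\ c,
\]
where the middle equality is again by the computation in the proof of Proposition~\ref{equiv-cap-0}. The main obstacle is largely bookkeeping: tracking how $\sigma_{+}(x_i)=|\I^{+}_i|$ and $\sigma_{-}(y_j)=|\I^{-}_j|$ enter through the multiplicity of block copies attached to each vertex, and verifying that the Kraus operators' block-at-$(q,r)$-only structure correctly localizes the computation of $T_{V,\sigma}$. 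The one genuinely substantive ingredient is the PSD rank inequality used in part (2), which is precisely what forces the inequality to go in the direction needed for the subrepresentation extracted from $Y$ to certify $\sigma$-semi-stability violations of at least the prescribed discrepancy.
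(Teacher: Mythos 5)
Your proposal is correct and follows essentially the same route as the paper: compute $T_{V,\sigma}(Y)$ block-by-block using the single-nonzero-block structure of the Kraus operators, identify $\rk(T_{V,\sigma}(Y))$ with $\sum_j\sigma_{-}(y_j)\dim\bigl(\sum_{i,a}V(a)(W(x_i))\bigr)$, bound $\rk(Y)$ by the diagonal-block ranks, and reduce part (3) to part (2) via the projection computation from Proposition \ref{equiv-cap-0}. Your only additions are to make explicit the observation that $T_{V,\sigma}$ sees only the diagonal blocks of $Y$ and to supply a proof of the PSD block-rank inequality $\rk(Y)\leq\sum_r\rk(Y_{r,r})$, which the paper invokes without justification.
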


\begin{proof} $(1)$ We have that $T_{V, \sigma}(Y)$ is the $M \times M$ block diagonal matrix whose $(q,q)$-block-diagonal entry is
$$
\sum_{i=1}^n~ \sum_{a \in \ar_{i,j}} V(a)\left(\sum_{r \in \I^{+}_i} Y_r  \right) V(a)^T,
$$
for all $q \in \I^{-}_j$ and $j \in [m]$. Thus we get that
\begin{align*}
\rk(T_{V,\sigma}(Y)) & =\sum_{j=1}^m \sigma_{-}(y_j) \rk \left( \sum_{i=1}^n \sum_{a \in \ar_{i,j}} \sum_{l,r} V(a)u^i_l (V(a)u^i_l )^T   \right) \\
& = \sum_{j=1}^m \sigma_{-}(y_j) \dim \left( \sum_{i=1}^n \sum_{a \in \ar_{i,j}} V(a)(W(x_i))   \right) \leq \sum_{j=1}^m \sigma_{-}(y_j) \dim W(y_j). \\
\end{align*} 
From this it follows that
$$
\rk(Y)-\rk(T_{V, \sigma}(Y)) \geq \sum_{i=1}^n \sigma_{+}(x_i)\dim W(x_i)-\sum_{j=1}^m \sigma_{-}(y_j) \dim W(y_j)=\sigma \cdot \ddim W.
$$

\noindent $(2)$ We have
\begin{equation}\label{eqn1-lemma-quivers}
\rk(Y) \leq \sum_{i=1}^n \sum_{r \in \I^{+}_i} \rk(Y_r)\leq \sum_{i=1}^n \sigma_{+}(x_i) \dim W(x_i),
\end{equation}
and 
\begin{equation}\label{eqn2-lemma-quivers}
\begin{split}
\rk(T_{V,\sigma}(Y))& =\sum_{j=1}^m \sigma_{-}(y_j) \rk \left( \sum_{i=1}^n \sum_{a \in \ar_{i,j}} V(a) \left(\sum_{r \in \I^+_i} Y_r \right) V(a)^T   \right) \\
& = \sum_{j=1}^m \sigma_{-}(y_j) \rk \left( \sum_{i, a} \sum_{l,r} V(a)(\sqrt{\lambda^{i,r}_l} u^{i,r}_l) \left( V(a) (\sqrt{\lambda^{i,r}_l} u^{i,r}_l) \right)^T  \right) \\
&= \sum_{j=1}^m \sigma_{-}(y_j) \dim \left(\sum_{i=1}^n \sum_{a \in \ar_{i,j}}V(a)(W(x_i))   \right)=\sum_{j=1}^m \sigma_{-}(y_j) \dim W(y_j).
\end{split}
\end{equation}

It now follows from $(\ref{eqn1-lemma-quivers})$ and $(\ref{eqn2-lemma-quivers})$ that
$$
\sigma \cdot \ddim W \geq \rk(Y)-\rk(T_{V, \sigma}(Y)).
$$

\noindent $(3)$ Using the exact same arguments as in the proof of $(iv) \Longrightarrow (iii)$ in Proposition \ref{equiv-cap-0}, we get that
$$
\rk(Y)-\rk(T_{V, \sigma}(Y)) \geq \dim U-\dim \left( \sum_{i=1}^L A_i(U) \right).
$$
This combined with part $(2)$ yields
$$
\sigma \cdot \ddim W \geq c.
$$
\end{proof}

\begin{definition} Let $(V, \sigma)$ be a quiver datum as above. We define the \emph{discrepancy} of $(V,\sigma)$ to be
$$\disc(V, \sigma):=\max \{\sigma \cdot \ddim W \mid W \subseteq V \}.$$ 
\end{definition}

\begin{rmk}
It is clear that $V$ is $\sigma$-semi-table if and only if $\disc(V, \sigma)=0$.
\end{rmk}

Recall that the discrepancy of $\A_{V, \sigma}$ is defined as 
$$\disc(\A_{V, \sigma}):=\max\{c \in \NN \mid \text{there exists a~} c\text{-shrunk subspace for~} \A_{V,\sigma}\}.$$
We mention that $N-\disc(\A_{V, \sigma})$ is the so-called non-commutative rank of the matrix space $\langle \A_{V,\sigma} \rangle \subseteq \RR^{N \times N}$ (see for example \cite{IQS18}).

\begin{corollary}\label{disc-cor} Let $Q$ be a bipartite quiver and $(V, \sigma)$ a quiver datum as above. Then
$$
\disc(V, \sigma)=\disc(\A_{V, \sigma}).
$$
\end{corollary}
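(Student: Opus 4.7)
The plan is to prove the two inequalities $\disc(\A_{V,\sigma}) \geq \disc(V,\sigma)$ and $\disc(V,\sigma) \geq \disc(\A_{V,\sigma})$ separately, each by directly invoking the relevant direction of Lemma~\ref{subrep-witness-shrunk-general-lemma}. That lemma has already done the heavy lifting: part~(1) converts a subrepresentation into a shrunk subspace with at least as good a defect, and part~(3) converts a shrunk subspace back into a subrepresentation of at least as large a $\sigma$-weight. With both constructions in hand, the corollary is essentially a two-line packaging of the lemma.

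For the first inequality, I would let $W \leq V$ be an arbitrary subrepresentation. Applying Lemma~\ref{subrep-witness-shrunk-general-lemma}(1), I build the positive semi-definite block matrix $Y \in \RR^{N \times N}$ from orthonormal bases of the spaces $W(x_i)$, pass to its associated subspace $U \leq \RR^N$, and read off
\[
\dim U - \dim\Bigl(\sum_{i=1}^{L} A_i(U)\Bigr) \geq \sigma \cdot \ddim W.
\]
Hence $U$ witnesses $\disc(\A_{V,\sigma}) \geq \sigma \cdot \ddim W$ whenever the right-hand side is non-negative; if $\sigma \cdot \ddim W < 0$, the inequality $\disc(\A_{V,\sigma}) \geq \sigma \cdot \ddim W$ is automatic since the zero subspace is always a $0$-shrunk subspace. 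Maximizing over $W$ gives $\disc(\A_{V,\sigma}) \geq \disc(V,\sigma)$.

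For the reverse inequality, I would start with any $c$-shrunk subspace $U \leq \RR^N$ with $c \in \NN$, let $Y$ be the matrix of orthogonal projection onto $U$ (exactly as in the proof of Proposition~\ref{equiv-cap-0}), and appeal to Lemma~\ref{subrep-witness-shrunk-general-lemma}(3) to extract a subrepresentation $W \leq V$ with $\sigma \cdot \ddim W \geq c$. Therefore $\disc(V,\sigma) \geq c$, and taking the maximum over all such $U$ gives $\disc(V,\sigma) \geq \disc(\A_{V,\sigma})$. Combining the two inequalities finishes the proof.

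There is no significant obstacle left to overcome: the nontrivial bookkeeping about block-matrix ranks, orthonormal decompositions of positive semi-definite matrices, and images under the structure maps $V(a)$ was already absorbed into Lemma~\ref{subrep-witness-shrunk-general-lemma}. The only care required is to keep the sign conventions straight (recall $\sigma_{+}$ on $Q_0^{+}$ and $\sigma_{-} = -\sigma$ on $Q_0^{-}$) and, in the direction from subrepresentations to shrunk subspaces, to avoid claiming a nonexistent ``negatively shrunk'' subspace by defaulting to $U = 0$ when $\sigma \cdot \ddim W$ is negative.
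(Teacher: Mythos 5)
Your proof is correct and follows exactly the paper's route: Lemma \ref{subrep-witness-shrunk-general-lemma}(1) gives $\disc(\A_{V,\sigma}) \geq \disc(V,\sigma)$ and Lemma \ref{subrep-witness-shrunk-general-lemma}(3) gives the reverse inequality. Your extra care about the case $\sigma \cdot \ddim W < 0$ is a harmless (and reasonable) refinement that the paper leaves implicit.
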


\begin{proof} From Lemma \ref{subrep-witness-shrunk-general-lemma}{(3)} we immediately get that $\disc(V, \sigma)\geq \disc(\A_{V, \sigma})$ while the reverse inequality follows from Lemma \ref{subrep-witness-shrunk-general-lemma}{(1)}. 
\end{proof}
 
We are now ready to prove Theorem \ref{main-thm-2}. 
 
\begin{proof}[Proof of Theorem \ref{main-thm-2}] Let $\{x_1, \ldots, x_n\}$ and $\{y_1, \ldots, y_m\}$ be the vertices of $Q$ where $\sigma$ is positive and negative, respectively. Let $Q^{\sigma}$ be the bipartite quiver with partite sets $\{x_1, \ldots, x_n\}$ and $\{y_1, \ldots, y_m\}$, respectively; furthermore, for every oriented path $p$ in $Q$ from $x_i$ to $y_j$, we draw an arrow $a_p$ from $x_i$ to $y_j$ in $Q^{\sigma}$. We denote the restriction of $\dd$ (or $\sigma$) to $Q^{\sigma}$ by the same symbol.

Note that every representation $V$ of $Q$ defines a representation $V^{\sigma}$ of $Q^{\sigma}$ as follows
\begin{itemize}
\item $V^{\sigma}(x_i)=V(x_i)$, $V^{\sigma}(y_j)=V(y_j)$ for all $i \in [n]$, $j \in [m]$, and
\smallskip
\item $V^{\sigma}(a_p)=V(p)$ for every arrow $a_p$ in $Q^{\sigma}$.
\end{itemize}	

Then $V$ (or $V^{\sigma}$) is $\sigma$-semi-stable as a representation of $Q$ (or $Q^{\sigma}$) if and only if the same statement holds over the field of complex numbers (see \cite[Proposition 2.4]{HosSch2017}). On the other hand, it has been proved in \cite[Theorem 10]{ChiKli-Edmonds-2020} that over $\CC$, $V_{\CC}$ is $\sigma$-semi-stable as a representation of $Q$ if and only if $V^{\sigma}_{\CC}$ is $\sigma$-semi-stable as a representation of $Q^{\sigma}$. We thus conclude that

\begin{equation}\label{eqn-semi-Q-Qsigma}
V \text{~is~}\sigma\text{-semi-stable} \Longleftrightarrow V^{\sigma} \text{~is~}\sigma\text{-semi-stable}. 
\end{equation}
 
\noindent \emph{(i)}~ This part of the theorem follows from the second part via $(\ref{eqn-semi-Q-Qsigma})$. 

\bigskip

\noindent \emph{(ii)}~ For this part, we assume that $Q$ is bipartite. According to \cite[Theorem 1.5]{IQS18}, there exists a polynomial time algorithm that constructs a $\disc(\A_{V, \sigma})$-shrunk subspace $U$ for $\A_{V, \sigma}$. Applying Proposition \ref{subrep-witness-shrunk-general-lemma} to the subspace $U$, we can construct a subrepresentation $W$ of $V$ such that
$$
\sigma \cdot \ddim W \geq \disc(\A_{V, \sigma}).
$$
It now follows from Corollary \ref{disc-cor} that
$$
\sigma \cdot \ddim W=\disc(V, \sigma).
$$
In particular, if $\sigma \cdot \ddim W=0$ then $V$ is $\sigma$-semi-stable. Otherwise, $W$ is the desired witness.
\end{proof} 

\begin{remark}
\begin{enumerate}
\item One of the key advantages of the Ivanyos-Qiao-Subrahmanyam's algorithm in \cite{IQS18} over Algorithm \ref{algGlab} is not only that it outputs a $c$-shrunk subspace but it constructs such a shrunk subspace certifying that $c$ is precisely $\disc(\A_{V, \sigma})$. Furthermore, the algorithm is algebraic in nature, working over large enough fields.

\item Keeping in mind the construction of $Q^{\sigma}$ in the proof of Theorem \ref{main-thm-2}, while any subrepresentation of $V$ gives rise to a subrepresentation of $V^{\sigma}$, it is not clear how to extend a given subrepresentation of $V^{\sigma}$ to a subrepresentation of $V$. So, when it comes to producing a certificate for $V$ not being semi-stable in Theorem \ref{main-thm-2}{(1)}, we can only do it in the form of a subrepresentation of $V^{\sigma}$. 

\item As pointed out to us by the anonymous referee and Chi-yu Cheng, it is natural to ask whether an SRSR solution can actually be obtained via a maximally destabilizing 1-parameter subgroup.  We plan to address this question in a future publication.
\end{enumerate}
\end{remark}


\begin{thebibliography}{GGOW15}

\bibitem[CD19]{ChiDer-2019}
C.~{Chindris} and H.~{Derksen}, \emph{{The capacity of quiver representations
  and Brascamp-Lieb constants}}, arXiv e-prints (2019), arXiv:1905.04783.

\bibitem[CK20]{ChiKli-Edmonds-2020}
C.~{Chindris} and D.~{Kline}, \emph{{Orbit Semigroups of Quiver Representations
  and Edmond's Problem}}.

\bibitem[Edm67]{Edm-1967}
J.~Edmonds, \emph{Systems of distinct representatives and linear algebra}, J.
  Res. Nat. Bur. Standards Sect. B \textbf{71B} (1967), 241--245. \MR{0229540}

\bibitem[EH88]{EisHar}
D.~{Eisenbud} and J.~{Harris}, \emph{{Vector Spaces of Matrices of Low Rank}},
  Advances in Mathematics (1988).

\bibitem[GGOW15]{GarGurOliWig-2015}
A.~{Garg}, L.~{Gurvits}, R.~{Oliveira}, and A.~{Wigderson}, \emph{{Operator
  scaling: theory and applications}}, ArXiv e-prints (2015).

\bibitem[{Gur}04]{Gurv04}
L.~{Gurvits}, \emph{{Classical Complexity and quantum entanglement}}, Journal
  of Computer and System Sciences (2004).

\bibitem[HM13]{Har-Moi-2012}
Moritz Hardt and Ankur Moitra, \emph{Algorithms and hardness for robust
  subspace recovery}, Proceedings of Machine Learning Research, vol.~30, 2013,
  pp.~354--375.

\bibitem[HS17]{HosSch2017}
V.~{Hoskins} and F.~{Schaffhauser}, \emph{{Rational points of quiver moduli
  spaces}}, ArXiv e-prints (2017).

\bibitem[IKQS15]{IKQS15}
G.~Ivanyos, M.~Karpinski, Y.~Qiao, and M.~Santha, \emph{Generalized wong
  sequences and their applications to edmonds' problems}, Journal of Computer
  and System Sciences \textbf{81} (2015), no.~7, 1373 -- 1386.

\bibitem[IQS17]{IQS18}
G.~Ivanyos, Y.~Qiao, and K~V. Subrahmanyam, \emph{{Constructive Non-Commutative
  Rank Computation Is in Deterministic Polynomial Time}}, 8th Innovations in
  Theoretical Computer Science Conference (ITCS 2017) (Dagstuhl, Germany)
  (Christos~H. Papadimitriou, ed.), Leibniz International Proceedings in
  Informatics (LIPIcs), vol.~67, Schloss Dagstuhl--Leibniz-Zentrum fuer
  Informatik, 2017, pp.~55:1--55:19.

\bibitem[Kin94]{K}
A.D. King, \emph{Moduli of representations of finite-dimensional algebras},
  Quart. J. Math. Oxford Ser.(2) \textbf{45} (1994), no.~180, 515--530.

\bibitem[LM18]{LerMau-2018}
Gilad Lerman and Tyler Maunu, \emph{An overview of robust subspace recovery},
  Proceedings of the IEEE \textbf{106} (2018), 1380--1410.

\bibitem[Lov89]{Lov-ERP-R1-1989}
L.~Lov\'{a}sz, \emph{Singular spaces of matrices and their application in
  combinatorics}, Bol. Soc. Brasil. Mat. (N.S.) \textbf{20} (1989), no.~1,
  87--99. \MR{1129080}

\bibitem[Sch80]{Schwartz-lemma-1980}
J.~T. Schwartz, \emph{Fast probabilistic algorithms for verification of
  polynomial identities}, J. ACM \textbf{27} (1980), no.~4, 701–717.

\bibitem[Zip79]{Zippel-lemma-1979}
R.~Zippel, \emph{Probabilistic algorithms for sparse polynomials}, Symbolic and
  Algebraic Computation (Berlin, Heidelberg) (Edward~W. Ng, ed.), Springer
  Berlin Heidelberg, 1979, pp.~216--226.

\end{thebibliography}

\end{document}